\newtheorem*{theorema}{Main Theorem}
\newtheorem*{corollary1}{Corollary}
\newtheorem{prop}{Proposition}[section]
\newtheorem*{USL}{Uniform Scale Lemma}
\newtheorem{lemma}[prop]{Lemma}
\newtheorem{theorem}[prop]{Theorem}
\theoremstyle{definition}
\newtheorem{remark}[prop]{Remark}
\renewcommand{\=}{ = }
\newcommand{\R}{\mathbb{R}}
\newcommand{\Z}{\mathbb{Z}}
\newcommand{\cL}{\mathcal{L}}
\newcommand{\cM}{\mathcal{M}}
\newcommand{\cQ}{\mathcal{Q}}
\newcommand{\cW}{\mathcal{W}}
\newcommand{\hJ}{\widehat{J}}
\newcommand{\hU}{\widehat{U}}
\DeclareMathOperator{\HD}{HD}
\DeclareMathOperator{\Crit}{Crit}
\DeclareMathOperator{\Cr}{Cr}
\DeclareMathOperator{\dist}{dist}
\DeclareMathOperator{\erg}{erg}
\DeclareMathOperator{\ac}{ac}
\DeclareMathOperator{\interior}{int}
\newcommand{\acip}{\mu_{\ac}}
\newcommand{\umu}{\underline{\mu}}
\newcommand{\un}{\underline{n}}
\newcommand{\Merg}{\cM_{\erg}(f)}
\newcommand{\hLambda}{\widehat{\Lambda}}
\newcommand{\hnu}{\widehat{\nu}}
\newcommand{\hmu}{\widehat{\mu}}
\begin{document}

\author{Yong Moo Chung}
\address{Department of Applied Mathematics,
Hiroshima University, Higashi-Hiroshima,
739-8527, Japan} 
\email{chung@amath.hiroshima-u.ac.jp}

\author{Juan Rivera-Letelier}
\address{Department of Mathematics, University of Rochester, Hylan
  Building, Rochester, NY~14627, U.S.A.} 
\email{riveraletelier@gmail.com}
\urladdr{\texttt{http://rivera-letelier.org}}

\author{Hiroki Takahasi}
\address{Keio Institute of Pure and Applied Sciences (KiPAS), Department of Mathematics,
Keio University, Yokohama,
223-8522, Japan} 
\email{hiroki@math.keio.ac.jp}
\urladdr{\texttt{http://www.math.keio.ac.jp/~hiroki/}}

\subjclass[2010]{37A50, 37C40, 37D25, 37D45, 37E05}

\title[Large deviation principle in one-dimensional dynamics]{Large deviation principle in one-dimensional dynamics}
\date{\today}
\begin{abstract}
We study the dynamics of smooth interval maps with non-flat critical points.
For every such a map that is topologically exact, we establish the full (level-2) Large Deviation Principle for empirical means.
In particular, the Large Deviation Principle holds for every non\nobreakdash-renormalizable quadratic map.
This includes the maps without physical measure found by Hofbauer and Keller, 
and challenges the widely-shared view of 
the Large Deviation Principle as a refinement of laws of large numbers.
\end{abstract}

\maketitle

\section{Introduction}
An important concept in dynamical systems is that of \emph{physical measure}.
An invariant probability measure $\mu$ of a dynamical system $f$ is \emph{physical} if 
 there exists a set~$E$ of positive Lebesgue measure in the phase space such that
for every $x\in E$ the empirical mean on the orbit $\{x,f(x),f^2(x),\ldots,f^{n-1}(x)\}$
converges to~$\mu$ as $n\to\infty$, in the weak* topology.
The theory of large deviations aims to provide exponential bounds on the probability that the empirical means stay away from $\mu$.
See, \emph{e.g.}, \cite{DemZei98,Ell85} for general accounts of large deviation theory.

For uniformly hyperbolic diffeomorphisms, physical measures have been constructed in the pioneering works of 
Sina{\u{\i}}, Ruelle and Bowen \cite{Bow75, Rue76, Sin72}.
In this setting, the \emph{Large Deviation Principle} (\emph{LDP} for short) has been established by Takahashi
\cite{Tak84,Tak87}, Orey \& Pelikan \cite{OrePel89}, Kifer \cite{Kif90}, Young \cite{You90}; 
it describes stochastic features of deterministic dynamics with chaotic behavior.

In recent years there have been considerable efforts to extend these results beyond the uniformly hyperbolic setting.
All previous results we are aware of are restricted to maps satisfying a weak form of hyperbolicity, see for example~\cite{ChuTak12,ChuTak14,ComRiv11,Gri93,KelNow92,Li16,MelNic08,PrzRiv11,ReyYou08} and references therein.
The only ones establishing a full LDP are~\cite{ChuTak12} and~\cite[Theorem~B]{ChuTak14}, for a set of positive measure of quadratic maps satisfying the Collet-Eckmann condition~\cite{ColEck83}.
See also~\cite{ComRiv11,Gri93,Li16} for full LDPs for maps satisfying a weak form of hyperbolicity, in which the empirical measures are weighted with respect to an equilibrium state of a H{\"o}lder continuous potential.
In spite of the relative incompleteness of the theory, there was a belief among experts that the LDP holds under weaker assumptions. 

In this paper we study smooth interval maps with only non-flat critical points.
The presence of critical points is a severe obstruction to uniform hyperbolicity.
We establish a full level-2 LDP for every such map that is topologically exact.
In particular, the LDP holds for every non-renormalizable quadratic map.
Notably, this includes maps having no physical measure, like the quadratic maps found by Hofbauer \& Keller in~\cite{HofKel90,HofKel95}.
Notice that the formulation of the LDP \cite{DonVar75} does not {\it a priori} assume the strong law
of large numbers or the existence of a physical measure.

We now proceed to describe our main results in more detail.
\subsection{Statement of results}
\label{ss:main}
Throughout this paper we set $X=[0,1]$,
and for a measurable subset~$A$ of~$X$ we denote by~$|A|$ its Lebesgue measure.

A \emph{critical point} of a differentiable map $f\colon X\to X$ is a
point at which the derivative of~$f$ vanishes.
Denote by~$\Crit(f)$ the set of critical points of~$f$.
A critical point~$c$ of~$f$ is \emph{non\nobreakdash-flat} if there are~$\ell > 1$
and diffeomorphisms~$\phi$ and~$\psi$ of~$\mathbb R$ such that $\phi(c)=\psi(f(c))=0$ 
 and such that for every~$x$ in a neighborhood of~$c$,
\begin{equation*}
|\psi \circ f(x) |= |\phi(x)|^{\ell}.
\end{equation*}
Note that a continuously differentiable map with only non-flat critical
points has
at most a finite number of critical points.

Denote by $\mathcal M$ the space of Borel probability measures on~$X$ endowed with the weak* topology.
For $x\in X$ denote by $\delta_x\in\mathcal M$ the Dirac measure at~$x$.
Given a continuous map~$f \colon X \to X$ and an integer~$n \ge 1$, define
$\delta_x^n= \frac{1}{n} \sum_{i=0}^{n-1}\delta_{f^i(x)}.$
The map~$f$ is \emph{topologically exact}
if for every nonempty open subset~$U$ of~$X$ there is an integer $n
\ge 1$ such that $f^n(U)=X$. 

\begin{theorema}
Let $f\colon X\to X$ have H{\"o}lder continuous derivative and only
non-flat critical points.
If~$f$ is topologically exact, then the full level-2 Large Deviation Principle holds, namely,
there exists a lower semi-continuous function $I\colon\mathcal M\to[0, \infty]$ such that:

-(lower bound) for every open subset $\mathcal G$ of $\mathcal M$,
\begin{equation*}\label{low}
\liminf_{n\to\infty}\frac{1}{n} \log \left|\left\{x\in X\colon\delta_x^n\in\mathcal G\right\}\right|\geq-\inf_{\mathcal G}I;\end{equation*} 

-(upper bound) for every closed subset $\mathcal K$ of $\mathcal M$,
\begin{equation*}\label{up}
\limsup_{n\to\infty}\frac{1}{n} \log \left|\left\{x\in X\colon\delta_x^n\in\mathcal K\right\}\right|\leq-\inf_{\mathcal K}I.\end{equation*} 
\end{theorema}
In the theorem above and in the rest of the paper,
$$ \log 0=-\infty,
\inf\emptyset=\infty
\text{ and }
\sup\emptyset=-\infty. $$

The function~$I$ is called a \emph{rate function}.
From the general theory of large deviations~\cite{DemZei98,Ell85}, the LDP determines~$I$ uniquely.
We show that~$-I$ is the upper semi-continuous regularization of the ``free energy function''.
Then the rate function is convex, and it is characterized as the Legendre transform of the cumulant generating function, see Sect.\ref{ss:comments}.

The traditional application of the LDP in dynamical systems is for maps having a physical measure.
In the probabilistic viewpoint of dynamical systems, the existence of a physical measure is analogous to the law of large numbers, and the LDP is a refinement of this law.
For concreteness, consider a map~$f \colon X \to X$ as in the Main Theorem that in addition has a physical measure~$\mu$.
Then the rate function~$I$ vanishes at~$\mu$ and, assuming~$f$ is sufficiently regular, for Lebesgue almost every point~$x$ in~$X$ the sequence of empirical measures~$\{ \delta_x^n \}_{n = 1}^{\infty}$ converges to~$\mu$ in the weak* topology, see~\cite[Theorem~8]{CaiLi09}.
This last property is thus analogous to the law of large numbers, and the LDP given by the Main Theorem is a refinement: the speed of convergence is controlled by the rate funcion~$I$.

The LDP given by the Main Theorem applies to situations beyond the traditional one, since it does not require the existence of a physical measure.
Note also that the LDP in the Main Theorem does not require any weak form of hyperbolicity.
To illustrate the broader applicability of the the Main Theorem, we give two new insights into the dynamics of quadratic maps.
The first concerns one of the quadratic maps~$f_0$ without physical measures studied by Hofbauer \& Keller in~\cite{HofKel90, HofKel95}.
The rate function of~$f_0$ vanishes entirely on its effective domain, in sharp contrast with the uniformly hyperbolic case where the rate function only vanishes at the physical measure.
The LDP given by the Main Theorem gives a quantitative version of the ``maximal oscillation'' property studied by Hofbauer \& Keller in~\cite{HofKel95}, see Sect.\ref{ss:comments} for details.
We also consider the quadratic Fibonacci map~$f_*$ studied by Lyubich \& Milnor~\cite{LyuMil93}, Keller \& Nowicki~\cite{KelNow95}, and others.
The equilibrium states of~$f_*$ for the geometric potential~$- \log |Df_*|$ form a segment, having the physical measure~$\mu_*$ of~$f_*$ as an endpoint.
Although the basin of an equilibirum state~$\mu$ different from~$\mu_*$ has zero Lebesgue measure, the LDP given by the Main Theorem implies that~$\mu$ still attracts a significant set of initial conditions, see Sect.\ref{ss:comments} for details.

Besides the uniformly hyperbolic case mentioned at the beginning of the introduction, the only previous full LDPs were established in~\cite{ChuTak12} and~\cite[Theorem~B]{ChuTak14} for a set of positive measure of quadratic maps satisfying the Collet-Eckmann condition.
See also~ \cite{ComRiv11, Gri93, Li16}\footnote{See also the survey article of Denker~\cite{Den96}.} for full LDPs for maps satisfying a weak form of hyperbolicity, in which the empirical measures are weighted with respect to an equilibrium state of a H{\"o}lder continuous potential.
For local LDPs, see~\cite[Theorems~1.2 and~1.3]{KelNow92}, \cite{MelNic08}, \cite[Corollary~B.4]{PrzRiv11}, \cite{ReyYou08}, and references therein.

We now state a corollary of the Main Theorem that follows from the general theory of large deviations.
We use it below to compare our result with previous related ones.
Let~$\mathcal M(f)$ be the subspace of~$\mathcal M$ of those measures that are $f$-invariant.
For a continuous function $\varphi\colon X\to\mathbb R$ define
$$ c_\varphi
=
\min \left\{ \int \varphi d\nu\colon\nu\in\mathcal M(f) \right\}
\text{ and }
d_\varphi
=
\max \left\{ \int \varphi d\nu\colon\nu\in\mathcal M(f) \right\}, $$
and for each integer $n\geq1$ and~$x$ in~$X$ write
$$ S_n\varphi(x)
=
\sum_{i=0}^{n-1}\varphi\circ f^i(x)
=
n \int \varphi d \delta_x^n. $$
Moreover, define a rate function $q_\varphi\colon \mathbb R\mapsto [0,  \infty]$ by
\begin{displaymath}
q_\varphi(t)
=
\inf\left\{I(\mu)\colon\mu\in\mathcal M, \int\varphi d\mu=t\right\}.
\end{displaymath}
This function is bounded on~$[c_\varphi, d_\varphi]$ and constant equal to~$\infty$ on~$\R \setminus [c_\varphi,d_\varphi]$.
Furthermore, $q_\varphi$ is convex on~$\R$, and therefore continuous on~$(c_\varphi,d_\varphi)$.

The following corollary is a direct consequence of the Main Theorem and of the contraction principle, see for example~\cite{DemZei98,Ell85}.
\begin{corollary1}
Let $f\colon X\to X$ have H{\"o}lder continuous derivative and only
non-flat critical points.
If~$f$ is topologically exact, then for every continuous function $\varphi\colon X\to\mathbb R$ satisfying $c_\varphi<d_\varphi$ and for every interval~$J$ intersecting $(c_\varphi,d_\varphi)$,
$$ \lim_{n\to\infty}\frac{1}{n}\log \left|\left\{x\in X\colon \frac{1}{n}S_n\varphi(x) \in J\right\}\right|
=
-\inf_{ J}q_\varphi.$$
\end{corollary1}

One previous result relevant to this corollary is that of Keller \& Nowicki
 \cite[Theorem~1.2]{KelNow92}, in the case where~$f$ is a $S$-unimodal map satisfying the Collet-Eckmann condition, see the definition of $S$-unimodal map below.
Denoting by~$\acip$ the unique absolutely continuous invariant probability (\emph{acip} for short) of~$f$, they proved that the corollary holds with~$\varphi = \log |Df|$ for every interval~$J$ whose boundary is contained in a small neighborhood of~$t = \int \log |Df| d\acip$.
 
Let us illustrate a broad applicability of the Main Theorem and its corollary in the context of ``$S$-unimodal'' maps, which we proceed to recall.
A non-injective continuously differentiable map $f \colon X \to X$ is \emph{unimodal}, if~$f(\partial X) \subset \partial X$, and if~$f$ has a unique critical point.
The unique critical point~$c$ of such a map must be in the interior of~$X$ and be of ``turning'' type; that is, $f$ is not locally injective at~$c$.
The map~$f$ is \emph{$S$\nobreakdash-unimodal}, if in addition~$c$ is non-flat for~$f$, and if~$f$ is of class~$C^3$ and has negative Schwarzian derivative on~$X \setminus \{ c \}$; in this context the non-flatness condition is the same as above with the additional requirement that the diffeomorphisms~$\phi$ and~$\psi$ are of class~$C^3$.

Each $S$-unimodal map has exactly one of the following dynamical characteristics:
\begin{enumerate}
\item[(i)]
it has an attracting cycle;
\item[(ii)]
it is infinitely renormalizable;
\item[(iii)]
it is at most finitely renormalizable.
\end{enumerate}
In case (iii) there is an integer~$p \ge 1$ and a closed interval~$J$ containing the critical point of~$f$ in its interior, such that $f^p(J)\subset J$, such that the return map $f^p\colon J\to J$ is topologically exact, and such that the intervals~$J$, $f(J)$, \ldots, $f^{p - 1}(J)$ have mutually disjoint interiors, see for example the combination of \cite[Theorem~V.1.3]{dMevSt93} and \cite[Theorem~2.19 and Proposition~2.34]{Rue17}.
This implies that a rescaling of~$f^p|_J$ satisfies the assumptions of the Main Theorem.
It follows that \emph{the LDP holds for \emph{every} at most finitely renormalizable $S$-unimodal map.}

For a real analytic family of $S$-unimodal maps with quadratic critical point and non-constant combinatorics, such as the quadratic family,
Lebesgue almost every parameter corresponds to either case~(i) or case~(iii), and in the latter case there is an acip~\cite{AviLyudMe03,Lyu02}.
The set of parameters corresponding to acips has positive Lebesgue measure~\cite{BenCar85,Jak81}.

\subsection{Further results and comments}
\label{ss:comments}
We characterize the rate function~$I$ in the Main Theorem as follows.
For $\nu\in\mathcal M(f)$ denote by $h(\nu)$ the entropy of $\nu$, and
define the \emph{Lyapunov exponent} $\lambda(\nu)$ of $\nu$ by $\lambda(\nu)= \int\log|Df|d\nu$.
The \emph{free energy function $F \colon \cM \to [-\infty,  \infty)$} is defined by,
$$ F(\nu)
=
\begin{cases}h(\nu)-\lambda(\nu) &\text{ if }\nu\in\mathcal M(f);\\
-\infty&\text{ otherwise.}\end{cases}$$
Since the map~$f$ in the Main Theorem is topologically exact, it has the specification property. Then it has no hyperbolic attracting periodic point 
and empirical measures along periodic orbits are dense in the space of invariant measures \cite[Theorem~1]{Sig74}.
Together with the upper semi-continuity of the Lyapunov exponent, this implies that for every~$\nu \in \cM(f)$ we have~$\lambda(\nu)\geq0$, see also
\cite[Proposition~A.1]{Riv1204}.
We show that the rate function~$I$ in the Main Theorem is given by
\begin{equation}
  \label{eq:6}
  I(\mu)
=
-\inf_{\mathcal G \ni \mu}\sup_{\mathcal G}F,
\end{equation}
where the infimum is taken over all open subsets~$\mathcal G$ of~$\mathcal M$ containing~$\mu$.
It follows that~$I$ is convex, and therefore that~$I$ is the Legendre transform of the cumulant generating function, see for example~\cite[Theorem~4.5.10(b)]{DemZei98}.
On the other hand, using~\eqref{eq:6} and the fact that the rate function takes only nonnegative values, we obtain from the LDP in the Main Theorem that for every~$\nu\in\mathcal M(f)$ we have~$F(\nu) \le 0$.
  This is known as Ruelle's inequality \cite{Rue78}.
  Note also that the rate function vanishes at each equilibrium state of~$f$ for the geometric potential $-\log |Df|$.
  That is, the rate function~$I$ vanishes at every measure~$\nu \in \cM(f)$ for which Rohlin's formula $F(\nu)=0$ holds.
  See below for an example where the function vanishes at a measure that is not an equilibrium state.
  
Consider a $S$-unimodal map~$f$ with a non-flat critical point that satisfies the Collet-Eckmann condition \cite{ColEck83}.
Then the corresponding rate function vanishes precisely at the (unique) acip~\cite[Theorem~A.1]{ChuTak17}.
As mentioned earlier, for such a map~$f$ we have the traditional application of the LDP in the Main Theorem as a refinement of the law of large numbers.

We now describe two applications of the LDP in the Main Theorem that go beyond the traditional application of refining the law of large numbers.
First, we consider one of the quadratic maps~$f_0 \colon X \to X$ without physical measures studied by Hofbauer \& Keller in~\cite[Theorem~5]{HofKel90} and~\cite{HofKel95}, see Theorem~\ref{HK} in the Appendix for a precise description.
The Main Theorem applies to~$f_0$ and the corresponding rate function vanishes entirely on its effective domain, see Theorem~\ref{HKrate} in the Appendix.
This is in sharp contrast with the uniformly hyperbolic case, for which the rate function only vanishes at the physical measure.
Applying the Corollary of the Main Theorem to~$f_0$, we obtain:
\begin{quote}
  Choose~$\varepsilon > 0$, an arbitrary invariant measure~$\mu$, and an arbitrary continuous function~$\varphi \colon X \to \R$.
  Then for~$n \ge 1$, the set~$E_n$ of all the initial conditions~$x_0$ for which
  \begin{displaymath} 
    \left| \frac{1}{n} \sum_{j = 0}^{n-1} \varphi(f_0^j(x_0)) - \int \! \varphi \, d \mu \right|
    \le \varepsilon,
  \end{displaymath}
  is sub-exponentially large with respect to~$n$:
  \begin{equation}
    \label{eq:4}
    \lim_{n \to \infty} \frac{1}{n} \log |E_n| = 0.
  \end{equation}
\end{quote}
Equivalently, there is a sub-exponentially large set of initial conditions for which the Birkhoff average of~$\varphi$ is near the mean with respect to~$\mu$.
This happens simultaneously for every invariant measure~$\mu$, and gives a quantitative version of the ``maximal oscillation" property of~$f_0$ shown by Hofbauer \& Keller in~\cite{HofKel95}.

The second application is for the Fibonacci quadratic map~$f_* \colon X \to X$, studied by Lyubich \& Milnor~\cite{LyuMil93}, Keller \& Nowicki~\cite{KelNow95}, and others.
This map has a physical measure~$\mu_*$ whose basin of attraction has full Lebesgue measure on~$X$~\cite[Theorem~1.3(4)]{LyuMil93}.
That is, for Lebesgue almost every point~$x$ in~$X$ the sequence of empirical measures~$\{ \delta_x^n \}_{n = 1}^{\infty}$ converges to~$\mu$ in the weak* topology. 
On the other hand, the closure of the critical orbit is a Cantor set that supports a unique invariant probability measure~$\nu_*$~\cite[Theorem~1.2]{LyuMil93}.
The measures~$\mu_*$ and~$\nu_*$ are the unique ergodic equilibrium states of~$f_*$ for the geometric potential $- \log |Df_*|$, so every equilibrium state is a convex combination of~$\mu_*$ and~$\nu_*$~\cite[Corollary~3.11 and Example~3.13]{BruKel98}.
The Main Theorem applies to~$f_*$ because this map is non-renormalizable.
The rate function~$I$ thus vanishes at each convex combination of~$\mu_*$ and~$\nu_*$.
Moreover, $I$ can only vanish at the convex combinations of~$\mu_*$ and~$\nu_*$, because the free energy function~$F$ for $f_*$ is upper semi-continuous and therefore~$I = -F$~\cite[Corollary~2.6 and Proposition~2.9]{BruKel98}.
Consider an equilibrium state~$\mu$ different from the physical measure~$\mu_*$.
Since~$\mu \neq \mu_*$, the basin of~$\mu$ has zero Lebesgue measure.
Nevertheless, $I(\mu) = 0$ and therefore the LDP lower bound given by the Main Theorem shows that~$\mu$ does attract a significant set of initial conditions: for every~$n \ge 1$ the set~$E_n$ of initial conditions~$x_0$ for which the empirical mean~$\delta_{x_0}^n$ is close to~$\mu$ satisfies~\eqref{eq:4}.
That is, $E_n$ is sub-exponentially large with~$n$.
Furthermore, the LDP given by the Main Theorem also shows that the equilibrium states of~$f_*$ for the potential~$- \log |Df_*|$ are the only invariant measures satisfying this property.
There is an analogous application of the LDP for Manneville-Pomeau maps, see~\cite{PolShaYur98}, \cite[Section~5]{Chu11} and \cite[Appendix~B]{ChuTak17}.
For a certain range of parameters, there is a physical measure whose basin has full Lebesgue measure, and the rate function vanishes precisely at the convex combinations of this measure and the Dirac mass at the indifferent fixed point.

Usually the free energy function~$F$ is not upper semi-continuous,\footnote{Although the entropy map is upper
  semi-continuous as a function of measures, the Lyapunov exponent
  function is not lower semi-continuous in general
since $f$ has critical points, see for
example~\cite[Proposition~2.8]{BruKel98}.} so in
general~$I$ is different from~$-F$.
For a concrete example for which these functions differ,
consider the quadratic map $f(x)=4x(1-x)$. Then $0$ is a hyperbolic repelling fixed point
and $F(\delta_0)=-\log4$. The Lyapunov exponents of all other ergodic measures are $\log2$,
and $\delta_0$ is weak*-approximated by measures supported on periodic points, and so 
$I(\delta_0) = \log2$.
For another example, 
consider a quadratic map~$f_1$ given by \cite[Theorem~3]{HofKel90}, whose unique physical measure is the Dirac measure supported at a repelling fixed point~$p$ of~$f_1$.
As mentioned before $I(\delta_p) = 0$, but~$F(\delta_p) = - \log |Df_1(p)| < 0$.
This is also an example where the rate function vanishes at a measure that is not an equilibrium state.

In~\cite{ChuTak12} a full level-2 LDP similar to the Main Theorem is shown for a positive measure set of Collet-Eckmann quadratic maps.
In this result, the rate function is the same as in the Main Theorem, but instead of weighting the empirical measures with respect to the Lebesgue measure, in~\cite{ChuTak12} they are measured with respect to the acip.
Combining both of these LDPs, we obtain that the Lebesgue measure and the acip are sub-exponentially close on a large class of dynamically defined sets.
\if0 This is somewhat paradoxical, since the density of the acip with respect to the Lebesgue measure is never in~$L^2$. \fi
It is not clear to us whether the LDP in~\cite{ChuTak12} holds for every Collet-Eckmann quadratic map, or if a parameter exclusion as in~\cite{ChuTak12} is needed.

Our methods apply with minor modifications to complex rational maps that are ``backward stable'' in the sense of~\cite{BloOve04a,Lev98a}; this is a condition analogous to the conclusion of Lemma~\ref{upbound}.
There is a large class of rational maps satisfying this property, including every polynomial with locally connected Julia set and all cycles repelling, see~\cite[Corollary~1]{Lev98a}.
There are however quadratic maps with all cycles repelling that are not backward stable, see~\cite[Remark~2]{Lev98a}.
Furthermore, it is not known whether every rational map satisfies the specification property, or some of this consequences, like the results in \cite{Sig74}.

\subsection{Outline of the paper}
\label{ss:outline}
In this section we outline the proof of the Main Theorem, and simultaneously describe the organization of the paper.

The proof of the Main Theorem follows the strategy originated in~\cite{Chu11} and that has been developed in~\cite{ChuTak12,ChuTak14}.
The main new ingredient is a diffeomorphic pull-back argument that simplifies the construction substantially, and that allows us to apply it to a larger class of maps.
The proof is divided in two parts: the lower bound is shown in Sect.\ref{s:lower bound}, and the upper bound in Sects.\ref{fundamental} and~\ref{s:upper bound}.

We show that the lower bound holds without the non-flatness hypothesis.
Roughly speaking, the proof of the lower bound consists of finding a set of points whose empirical means are close to a given invariant measure.
In the case this last measure is hyperbolic, the desired set is easily found using Katok-Pesin theory, which allows one to approximate each hyperbolic measure by hyperbolic sets in a particular sense.
The main difficulty is to deal with non-hyperbolic measures.
We use the specification property to approximate a non-hyperbolic measure by hyperbolic measures, in a suitable sense.
In this way we reduce the case of non-hyperbolic measures to the case of hyperbolic measures.

The upper bound is much harder, because a global control of the dynamics is required.
The main idea is to construct certain horseshoes with a finite number of branches that are tailored to a given open subset of~$\mathcal{M}$. 
This construction is necessarily involved due to the presence of the critical points.
In~\cite{ChuTak12,ChuTak14}, this method was implemented under strong assumptions on the orbit of the critical value, as mentioned earlier in the introduction.
In this paper, we use a diffeomorphic pull-back argument to replace the analytic horseshoe constructions in~\cite{ChuTak12,ChuTak14} by one of 
more topological flavor, enabling us to dispense with the strong assumptions on the critical orbits altogether.

The diffeomorphic pull-back argument is developed in Sect.\ref{fundamental}, where it is stated as the ``Uniform Scale Lemma.''
One of the main ingredients in the proof of this lemma are some general sub-exponential distortion bounds (Proposition~\ref{p:sub-exponential ratio growth} in~Sect.\ref{size}.)
These sub-exponential distortion bounds are combined with a method that goes back to~\cite{PrzRivSmi03}, to carefully avoid critical points and choose diffeomorphic
pull-backs.
The preliminary results needed to implement this method are established in Sect.\ref{dimension}, and the proof of the Uniform Scale Lemma is given in Sect.\ref{ss:proof of USL}.

The proof of the upper bounds is completed in Sect.\ref{s:upper bound}.
The main step is to construct, for a given basic open set of~$\cM(f)$ and for each large integer $n\geq1$, a certain horseshoe with inducing time~$q$, where $q\geq n$ and $ q-n =o(n)$ as $n\to\infty$ (Proposition~\ref{horseshoe} in Sect.\ref{covering}.)
By a {\it horseshoe with inducing time $q$} we mean a finite collection $L_1,L_2,\ldots,L_t$ of pairwise disjoint closed intervals
such that $f^q$ maps each $L_i$, $i\in\{1,2,\ldots,t\} ,$ diffeomorphically
onto an interval whose interior contains $\bigcup_{i=1}^tL_i$.
The inducing time~$q$ consists of three explicit parts: in the first~$n$ iterations, the intervals are mapped to a ball of radius~$n^{-\alpha}$, for a fixed constant~$\alpha>1$, centered at a carefully chosen base point;
in the second part, of roughly~$\log n$ iterations, intervals reach a fixed scale $\kappa>0$
independent of~$n$;
the third part, of a bounded number of iterations, the intervals return to a prefixed small interval.
In order to reach the scale $\kappa$, a key ingredient is the Uniform Scale Lemma in Sect.\ref{fundamental}.
Once the horseshoe is constructed, we prove two intermediate estimates in Sect.\ref{intermediate}.
The first is restricted to a small interval (Proposition~\ref{upper0}), and the second is a global estimate
(Proposition~\ref{upper}) obtained by using topological exactness to spread out the local estimate.
The local estimate is used to treat inflection critical points.
The proof of the upper bound is completed in Sect.\ref{end}.

\subsection{Notation}
\label{constants}
The following notation and terms are used in the rest of the paper.
For $x\in X$ and $\eta>0$ denote by $B(x,\eta)$ the closed ball of
radius~$\eta$ centered at~$x$, \emph{i.e.},
$$B(x,\eta)=\{y\in X\colon |y-x|\leq \eta\},$$
and for subsets~$A$ and~$A'$ of~$X$ define 
$$ B(A,\eta) \= \bigcup_{x\in A}B(x,\eta), \quad
\dist(x, A) \= \inf \{ |x - a| \colon a \in A \}, $$
and
$$\dist(A, A') \= \inf \{ |a - a'| \colon a \in A, a' \in A' \}. $$
A subset $F$ of $X$ is called \emph{$\eta$-dense} if 
$B(F,\eta) = X$ holds.
For a subset~$A$ of~$X$, denote by~$\HD(A)$ the Hausdorff dimension of~$A$.

Let~$f \colon X \to X$ be continuously differentiable.
A subset~$K$ of~$X$ is \emph{forward $f$-invariant} if $f(K)\subset K$.
The set~$K$ is called \emph{hyperbolic}, if there exist $C>0$ and $\lambda>1$ such that for every $x\in K$ and every integer $n\geq1$, $|Df^n(x)|\geq C\lambda^n$ holds.

\section{Large deviations lower bound}
\label{s:lower bound}
In this section we prove the large deviations lower bound in the Main Theorem.
As the proof below shows, these estimates hold without the non-flatness hypothesis.  
The following is the key estimate and it contains Ruelle's inequality.
It must be noted that in the following estimate we have to treat
measures with zero Lyapunov exponent.

\begin{prop}[Key Estimate]
\label{lower-prop}
Let $f \colon X \to X$ have H{\"o}lder continuous derivative and at most a finite
number of critical points.
Assume~$f$ is topologically exact.
Let $l\geq1$ be an integer, $\varphi_1,\varphi_2,\ldots,\varphi_l\colon X\to\mathbb R$ continuous
functions and $\alpha_1,\alpha_2,\ldots,\alpha_l \in  \mathbb R$.
Then for every $\mu\in\mathcal M(f)$ such that
$\int\! \varphi_jd\mu > \alpha_j$ for every $j\in\{1,2,\ldots,l\}$,
\begin{equation*}
 \liminf_{n\to\infty} \frac{1}{n} \log \left| \left\{ x\in X\colon
\frac{1}{n}S_n\varphi_j(x)>\alpha_j \text{ for every $j\in\{1,2,\ldots,l\}$}\right\} \right|
\geq F(\mu).
\end{equation*}
\end{prop}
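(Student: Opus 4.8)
Since $\log|Df|$ is bounded above and $f$ has finite topological entropy, $F(\mu)=h(\mu)-\lambda(\mu)$ is a finite real number for every $\mu\in\cM(f)$; moreover $\lambda(\nu)\ge 0$ for all $\nu\in\cM(f)$ and $F\le 0$ by Ruelle's inequality, so ``$\mu$ non-hyperbolic'' means precisely $\lambda(\mu)=0$, which by Ruelle forces $h(\mu)=0$ and $F(\mu)=0$. Write $A_n=\{x\in X:\tfrac1nS_n\varphi_j(x)>\alpha_j\text{ for all }j\}$ and $U=\{\nu\in\cM:\int\varphi_j\,d\nu>\alpha_j\text{ for all }j\}$, so the claim is $\liminf_n\tfrac1n\log|A_n|\ge F(\mu)$ and $\mu\in U$. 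The first step is a reduction: I claim that for every $\varepsilon>0$ there is an \emph{ergodic, hyperbolic} measure $\nu\in U$ with $F(\nu)>F(\mu)-\varepsilon$. Since $A_n$ does not depend on $\mu$, it then suffices to prove the estimate for such $\nu$; this yields $\liminf_n\tfrac1n\log|A_n|\ge F(\nu)>F(\mu)-\varepsilon$ for every $\varepsilon$, hence the proposition.

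\textbf{Producing the approximating ergodic hyperbolic measure.}
For this I would use the specification-type property available because $f$ is topologically exact (the same input as in Lemma~\ref{l:periodic density}) together with the existence of a repelling periodic orbit. Using the ergodic decomposition of $\mu$, the affinity of $h$ and $\lambda$, and the boundedness of $F$ on ergodic components, approximate $\mu$ weakly by a finite convex combination $\sum_i s_i\mu_i$ of ergodic measures with $\sum_i s_iF(\mu_i)>F(\mu)-\varepsilon$ and still in $U$; mix in a small weight of the periodic measure $\mu_p$ on a repelling periodic orbit to keep the combination in $U$ and within $\varepsilon$ of $F(\mu)$ in free energy. Now glue, via specification, long generic orbit segments of the $\mu_i$ and of the periodic orbit with the prescribed frequencies into a single orbit whose empirical measures converge to an ergodic measure $\nu$; standard entropy-density estimates make $h(\nu)$ close to $\sum_i s_ih(\mu_i)$, weak closeness puts $\nu\in U$, and since $\lambda\ge 0$ always while $\lambda(\nu)\le\int\max\{\log|Df|,-M\}\,d\nu$ is close to the corresponding (bounded, continuous) integral for the convex combination, one arranges $F(\nu)>F(\mu)-\varepsilon$ with $\lambda(\nu)>0$. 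In the non-hyperbolic case $F(\mu)=0$ this collapses to something simple: one only needs some ergodic hyperbolic $\nu\in U$ with $\lambda(\nu)<\varepsilon$, and the repelling periodic measure obtained by specification-shadowing a single long $\mu$-generic orbit segment works, because $\lambda(\nu)\le\int\max\{\log|Df|,-M\}\,d\nu$ tends to $\lambda(\mu)=0$ as the segment grows and $M\to\infty$.

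\textbf{The ergodic hyperbolic case via Katok--Pesin horseshoes.}
Given ergodic hyperbolic $\mu\in U$ and $\varepsilon>0$, invoke Katok--Pesin theory (in the form valid for $C^{1+\alpha}$ interval maps): there are $N\ge1$ and pairwise disjoint closed intervals $I_1,\dots,I_m$ lying in a small neighbourhood of $\operatorname{supp}(\mu)$ and disjoint from $\Crit(f)$, such that $f^N$ maps each $I_a$ diffeomorphically onto an interval containing $\bigcup_bI_b$, with $\tfrac1N\log m\ge h(\mu)-\varepsilon$, with $|Df^N|$ of order $e^{N\lambda(\mu)}$ up to a factor $e^{N\varepsilon}$ on $\bigcup_aI_a$, with uniformly bounded distortion for all iterates of $f^N$ along cylinders (here the H\"older continuity of $Df$ enters), and carrying an ergodic measure $\tilde\mu$ with $\tilde\mu$ weak-$*$ close to $\mu$, $h(\tilde\mu)>h(\mu)-\varepsilon$, $\lambda(\tilde\mu)<\lambda(\mu)+\varepsilon$; passing to a higher iterate I may take the $I_a$ so short that the first $NK$ iterates of every level-$K$ cylinder have diameter below any prescribed bound. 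Fix $\alpha_j<\alpha_j'<\int\varphi_j\,d\tilde\mu$; by Egorov there is a positive-measure set $G$ on which $\tfrac1n\log|Df^n|$ and $\tfrac1nS_n\varphi_j$ converge uniformly to $\lambda(\tilde\mu)$ and $\int\varphi_j\,d\tilde\mu$. For large $n$ set $K=\lfloor n/N\rfloor$ and look at the level-$K$ cylinders $J$ of the induced full-branch system that meet $G$: by Shannon--McMillan--Breiman there are at least $e^{NK(h(\mu)-2\varepsilon)}$ of them; each satisfies $|J|\ge c\,|Df^{NK}(x')|^{-1}\ge c\,e^{-NK(\lambda(\mu)+2\varepsilon)}$ at the point $x'\in J\cap G$ (bounded distortion, $f^{NK}(J)$ of definite length); and every $x\in J$ lies in $A_n$, because $|f^ix-f^ix'|$ is uniformly small for $i<NK$ (so the $\varphi_j$-averages along $J$ and along $x'$ differ by at most the modulus of continuity) and $\tfrac1nS_n\varphi_j(x')>\alpha_j'$ with enough room to absorb the fewer than $N$ extra terms. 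Hence $|A_n|\ge e^{NK(h(\mu)-2\varepsilon)}\cdot c\,e^{-NK(\lambda(\mu)+2\varepsilon)}$, and $NK/n\to1$ gives $\liminf_n\tfrac1n\log|A_n|\ge F(\mu)-4\varepsilon$; let $\varepsilon\to0$.

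\textbf{Main obstacle.}
The delicate point is the reduction step, namely producing an ergodic hyperbolic measure inside $U$ with free energy close to $F(\mu)$. The difficulty is twofold: the open set $U$ is not respected by the ergodic decomposition --- in general \emph{no} ergodic component of $\mu$ lies in $U$ --- so $\nu$ must be synthesised from several components by specification; and there is no distortion control near the critical points, so Lyapunov exponents cannot be compared along shadowing orbits. The second issue is circumvented by never estimating $\lambda(\nu)$ along pseudo-orbits, only through $\nu$ itself, using $\lambda\ge0$, upper semicontinuity of $\lambda$, and truncation of $\log|Df|$. By contrast the Katok--Pesin horseshoe argument is routine once a positive Lyapunov exponent is in hand, the only mild technicality being the uniform smallness of the cylinders, obtained by passing to a higher iterate.
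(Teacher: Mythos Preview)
Your reduction step contains a genuine gap. You write: ``glue, via specification, long generic orbit segments of the $\mu_i$ and of the periodic orbit with the prescribed frequencies into a single orbit whose empirical measures converge to an ergodic measure $\nu$; standard entropy-density estimates make $h(\nu)$ close to $\sum_i s_ih(\mu_i)$.'' But the empirical limit of a \emph{single} orbit is a single measure, and there is no mechanism that bounds its entropy from below; in fact that limit will typically have entropy zero. What entropy-density theorems actually do is construct a \emph{horseshoe} (many glued orbit segments, with the frequency pattern varying freely) and take an ergodic measure of near-maximal entropy on it. You have not carried this out, and invoking ``standard entropy-density estimates'' does not fill the gap, since the relevant theorems (Eizenberg--Kifer--Weiss, Pfister--Sullivan) are proved precisely by the horseshoe construction you are eliding. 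Moreover you must simultaneously arrange $\lambda(\nu)>0$, $\lambda(\nu)\le\lambda(\mu)+\varepsilon$, and $\nu\in U$; upper semicontinuity of $\lambda$ handles the second, but keeping $\lambda(\nu)$ strictly positive while tracking entropy through the gluing needs a more careful argument than ``mix in a small weight of $\mu_p$.''

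The paper's proof avoids this detour entirely: it never reduces to a single ergodic hyperbolic measure. Instead it takes the ergodic decomposition $\mu\approx\sum_i\beta_i\mu_i$, applies Katok's horseshoe (Lemma~\ref{katok}) to each $\mu_i$ with $h(\mu_i)>0$ and periodic approximation (Lemma~\ref{l:periodic density}) to each $\mu_i$ with $h(\mu_i)=0$, and then \emph{concatenates} these horseshoes using topological exactness ($f^M(K^i)=X$) into one large family $\mathcal L$ of intervals, each mapped onto $X$ by $f^m$. The Lebesgue measure of $\bigcup_{L\in\mathcal L}L$ is then estimated directly from $\#\mathcal L\ge e^{(h(\mu)-\varepsilon)m}$ and $|L|\ge e^{-(\lambda(\mu)+\varepsilon)m}$. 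This concatenated horseshoe is exactly the object you would need to build to prove your entropy-density claim; but once it is built, there is no reason to extract an ergodic measure $\nu$ from it and then re-apply Katok to $\nu$ --- the Lebesgue-measure lower bound can be read off immediately. Your treatment of the ergodic hyperbolic case is essentially the paper's Lemma~\ref{katok} plus a cylinder count, and is fine on its own; it is the reduction to that case that is both unjustified and unnecessary.
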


In the proof of this proposition we use the following version of
Katok's theorem, which allows one to approximate each hyperbolic
measure by hyperbolic sets in a particular sense, compare with~\cite[Theorem~S.5.9]{KatHas95} and~\cite[Theorem~4.1]{PrzRiv19}.
Using Dobbs' adaptation of Pesin's theory to interval maps~\cite[Theorem~6]{Dob14}, the
proof is a slight modification of that of~\cite[Theorem~S.5.9]{KatHas95}
and hence we omit it.
For a continuous map~$f \colon X \to X$, a subset~$U$ of~$X$, and an integer $n\geq1$, each connected component of $f^{-n}(U)$ is called a \emph{pull-back of $U$ by $f^n$}. 
If in addition~$f$ is differentiable, then a pull-back~$J$ of~$U$ by~$f^n$ is called \emph{diffeomorphic} if $f^n
\colon J \to U$ is a diffeomorphism.
\begin{lemma}
\label{katok}
Let $f \colon X \to X$ have H{\"o}lder continuous derivative and at most a finite
number of critical points.
Let $\mu\in\mathcal M(f)$ be ergodic and such that $h(\mu)>0.$
Let $l\ge 1$ be an integer, and $\varphi_1, \ldots , \varphi_l \colon X\to \mathbb R$ continuous functions.
Then for every $\varepsilon>0$ there are integers $k\geq 2$ and
$m\geq1$ satisfying $\frac{1}{m} \log k  \ge h(\mu) -\varepsilon$, a
closed subinterval~$K$ of~$X$, and pairwise disjoint diffeomorphic pull-backs $K_1,
\ldots , K_k$ of~$K$ by~$f^m$ contained in~$K$, such that the following holds:
$$\left| \frac{1}{m}S_m \varphi_j (x) -  \int\! \varphi_j d\mu \right| < \varepsilon\ \text{for every $x\in\bigcup_{i=1}^k K_i$ and every $j\in\{1,\ldots,l\}$};$$
and
$$e^{(\lambda (\mu) -\varepsilon )m }\leq|Df^m(x) | \le e^{(\lambda (\mu) +\varepsilon )m}\    \text{for every $x\in\bigcup_{i=1}^k K_i$}.$$
\end{lemma}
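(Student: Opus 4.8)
The plan is to carry out the classical horseshoe construction of Katok \cite[Theorem~S.5.9]{KatHas95}, modified in two places forced by the present setting: since the map is non-invertible and has critical points, the local unstable manifolds of the diffeomorphism case are replaced by diffeomorphic pull-backs of a single small interval, and Pesin theory is invoked in the form adapted to interval maps by Dobbs \cite[Theorem~6]{Dob14}. To begin, since $\mu$ is ergodic with $h(\mu)>0$, it has positive Lyapunov exponent and hence is hyperbolic (a standard fact for maps with H\"older continuous derivative and finitely many critical points). Dobbs' theorem then yields, for every small $\gamma>0$, a \emph{Pesin block}: a set $\Lambda$ with $\mu(\Lambda)>1-\gamma$, a scale $\delta_0>0$ and a distortion constant $C\geq1$ (both depending on $\Lambda$, with $C\to1$ as $\delta_0\to0$), such that at $\mu$-a.e.\ point, for a set of integers $n$ of density one, the inverse branch $\phi$ of $f^n$ determined by the orbit $x,f(x),\ldots,f^n(x)$ is defined on $B(f^n(x),\delta_0)$, has distortion at most $C$, and satisfies $\operatorname{diam} f^t\bigl(\phi(B(f^n(x),\delta_0))\bigr)\leq C\delta_0e^{-(\lambda(\mu)-\varepsilon)(n-t)}$ for $0\leq t\leq n$. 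Combining this with the Birkhoff ergodic theorem applied to $\varphi_1,\ldots,\varphi_l$ and to $\log|Df|$, and with Egorov's theorem, one can shrink $\Lambda$ (keeping $\mu(\Lambda)>1-\gamma$) and find $N\geq1$ so that for every $x\in\Lambda$ and every $n\geq N$ one has $\big|\tfrac1nS_n\varphi_j(x)-\int\varphi_j\,d\mu\big|<\varepsilon/4$ for all $j$, $\big|\tfrac1n\log|Df^n(x)|-\lambda(\mu)\big|<\varepsilon/2$, and the inverse-branch property above holds at time $n$.

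Next I would fix the scales and the combinatorial skeleton. By Katok's characterization of entropy via separated sets, there is a threshold $\delta^*>0$, depending only on $\mu$ and $\varepsilon$, such that every set of $\mu$-measure at least $\tfrac12$ contains, for arbitrarily large integers $n$, an $(n,\delta^*)$-separated subset of cardinality at least $e^{n(h(\mu)-\varepsilon/2)}$; fix $\delta:=\delta^*$ and then choose $\delta_0$ small enough (hence $\Lambda$ with $\mu(\Lambda)>\tfrac12$) that $C\delta_0e^{-(\lambda(\mu)-\varepsilon)}<\delta/2$. Pick a point $z$ with $\mu(\Lambda\cap\interior K)>0$ for every small closed interval $K\ni z$; fix such a $K$ with $\operatorname{diam} K<\delta_0$, a closed interval $K'\subset\interior K$ with $\mu(\Lambda\cap K')>0$, and a ball $\widetilde K\subset\interior K$ with $\mu(\widetilde K)>0$. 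A Poincar\'e recurrence and pigeonhole argument exactly as in the proof of \cite[Theorem~S.5.9]{KatHas95} then produces, for arbitrarily large $m\geq N$, an $(m,\delta)$-separated set $\{x_1,\ldots,x_k\}\subset\Lambda\cap K'$ consisting of points at which the inverse-branch property holds at time $m$ and which satisfy $f^m(x_i)\in\widetilde K$, with $k\geq e^{m(h(\mu)-\varepsilon)}$. Fix one such $m$, large enough that moreover $k\geq2$ and $\tfrac1m\log C<\varepsilon/2$; then $\tfrac1m\log k\geq h(\mu)-\varepsilon$.

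It remains to define the pull-backs and verify the conclusions. For each $i$, since $\operatorname{diam} K<\delta_0$ and $f^m(x_i)\in\widetilde K\subset K$, the interval $K$ is compactly contained in $B(f^m(x_i),\delta_0)$, so the inverse branch $\phi_i$ of $f^m$ along the orbit of $x_i$ is defined on $K$; set $K_i:=\phi_i(K)$. Then $K_i$ is a connected component of $f^{-m}(K)$, the restriction $f^m\colon K_i\to K$ is a diffeomorphism, and $x_i=\phi_i(f^m(x_i))\in K_i$. Moreover $\operatorname{diam} K_i\leq C\delta_0e^{-(\lambda(\mu)-\varepsilon)m}$, which for $m$ large is less than $\dist(K',X\setminus K)$, so $K_i\subset\interior K$. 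If $K_i\cap K_j\neq\emptyset$ for some $i\neq j$, then any $y$ in the intersection satisfies, for all $0\leq t<m$, both $|f^t(y)-f^t(x_i)|$ and $|f^t(y)-f^t(x_j)|$ at most $C\delta_0e^{-(\lambda(\mu)-\varepsilon)}<\delta/2$, whence $|f^t(x_i)-f^t(x_j)|<\delta$ for every such $t$, contradicting $(m,\delta)$-separation; hence the $K_i$ are pairwise disjoint. The distortion bound gives $\big|\log|Df^m(x)|-\log|Df^m(x_i)|\big|\leq\log C$ for $x\in K_i$, and dividing by $m$ and combining with $\big|\tfrac1m\log|Df^m(x_i)|-\lambda(\mu)\big|<\varepsilon/2$ yields the asserted derivative estimate on $\bigcup_iK_i$. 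For the Birkhoff averages, the diameter bound shows that for all but a bounded number of indices $t<m$ the points $f^t(x)$ and $f^t(x_i)$ lie within $\rho$ of one another, where $\rho>0$ is any prescribed number; choosing $\rho$ so that uniform continuity gives $|\varphi_j(s)-\varphi_j(s')|<\varepsilon/8$ whenever $|s-s'|<\rho$, and noting the remaining $O(1)$ terms contribute $O(1/m)$, we obtain $\big|\tfrac1mS_m\varphi_j(x)-\tfrac1mS_m\varphi_j(x_i)\big|<\varepsilon/2$ for $m$ large and $x\in K_i$; combining with $\big|\tfrac1mS_m\varphi_j(x_i)-\int\varphi_j\,d\mu\big|<\varepsilon/4$ (valid since $x_i\in\Lambda$, $m\geq N$) gives the first displayed inequality of the lemma.

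The main obstacle is the construction of \emph{diffeomorphic} pull-backs: one must guarantee that the orbit segment $x_i,f(x_i),\ldots,f^{m-1}(x_i)$ stays a definite distance away from $\Crit(f)$ and that an inverse branch of $f^m$ with controlled distortion exists on a scale $\delta_0$ independent of $m$. This is precisely the content of Dobbs' interval-map version of Pesin theory \cite[Theorem~6]{Dob14}; it also supplies the density-one set of good times along a.e.\ orbit, which is what lets the recurrence/pigeonhole step land at a time $m$ that is simultaneously a good time. Granting this input, the remainder is the classical Katok argument, and this is why the proof is a slight modification of \cite[Theorem~S.5.9]{KatHas95}.
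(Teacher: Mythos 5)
Your proposal is correct in outline and follows exactly the route the paper indicates: the paper omits the proof of Lemma~\ref{katok}, stating only that it is a slight modification of Katok's horseshoe theorem \cite[Theorem~S.5.9]{KatHas95} carried out with Dobbs' interval-map version of Pesin theory \cite[Theorem~6]{Dob14}, which is precisely what you do (Pesin block with uniform-scale inverse branches and bounded distortion, Katok's entropy formula via separated sets, recurrence/pigeonhole to a common return time, and pull-backs of a small interval in place of unstable manifolds). The only caveats are bookkeeping ones that the cited Katok argument already handles, e.g.\ the separated set should be produced in a large-measure block and then pigeonholed into a small interval (rather than invoking the entropy formula directly on the small-measure set $\Lambda\cap K'$), and ``density one'' of good times should read ``density at least $1-\gamma$''.
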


\begin{proof}[Proof of Proposition~\ref{lower-prop}]
Fix~$\varepsilon > 0$ sufficiently small so that 
$\int \varphi_j d\mu > \alpha_j + \varepsilon$ holds for every $j\in\{1,
\ldots, l \}$.
For each~$(n_0, \ldots, n_{l + 1})\in\Z^{l + 2}$ put
$$ C((n_0, \ldots, n_{l + 1}))
\=
\left[ n_0 \frac{\varepsilon}{3}, (n_0 + 1) \frac{\varepsilon}{3}
\right) \times \cdots \times \left[ n_{l + 1} \frac{\varepsilon}{3}, (n_{l + 1} + 1) \frac{\varepsilon}{3}
\right). $$
Denote by~$\Merg$ the subset of~$\cM(f)$ of ergodic measures, and let~$\Phi \colon \Merg \to \R^{l + 2}$ be the function defined by
$$ \Phi(\nu)
\=
\left( h(\nu), \chi(\nu), \int\! \varphi_1 d\nu, \ldots,
  \int\! \varphi_l d\nu \right). $$
Finally, let~$Z$ be the subset of~$\Z^{l + 2}$ of those~$\un$ such
that~$\Phi^{-1}(C(\un))$ is nonempty, set~$s \= \# Z$, choose a bijection~$\iota \colon
\{1, \ldots, s \} \to Z$, and for each~$i\in\{1, \ldots, s \}$ choose a measure~$\mu_i \in  \Merg$ in~$\Phi^{-1}(C(\iota(i)))$.
Thus, if~$\umu$ is the unique probability measure on~$\Merg$ such that~$\mu
= \int \nu d \umu(\nu)$, and for each~$i\in\{1, \ldots, s \}$ we
put~$\beta_i \= \umu \left( \Phi^{-1}(C(\iota(i))) \right)$, then the
measure $\mu' \= \beta_1 \mu_1 + \cdots + \beta_s \mu_s$ is
in~$\cM(f)$, and satisfies
$|h(\mu) - h(\mu')| \le \frac{\varepsilon}{3}$, $|\lambda(\mu) - \lambda(\mu')|  \le \frac{\varepsilon}{3}$, and for each~$j\in\{1, \ldots, l \}$,
$$ \left| \int\! \varphi_j d \mu - \int\! \varphi_j d\mu' \right|  \le
\frac{\varepsilon}{3}. $$

For each~$i\in\{1, \ldots, s \}$ define integers~$k_i$ and~$m_i$
and subintervals~$K^i$, $K^i_1$, \ldots, $K^i_{k_i}$ of~$X$, as
follows.
In the case where~$h(\mu_i) > 0$, let $k_i \= k$, $m_i \= m$, $K^i \= K,
K_1^i \= K_1, \ldots, K^i_{k_i} \= K_{k_i}$ be as in Lemma~\ref{katok} with~$\varepsilon$ replaced by~$\frac{\varepsilon}{3}$.
Suppose~$h(\mu_i) = 0$.
By \cite[Theorem~1]{Sig74} and the upper semi-continuity of the Lyapunov exponent function there is a periodic point~$p$ such that, if we denote
by~$N \ge 1$ its minimal period, then~$\frac{1}{N} \log |Df^N(p)|
\le \lambda(\mu_i) + \frac{\varepsilon}{6}$ and for each~$j\in\{1, \ldots, l \}$,
$$ \left| \frac{1}{N} S_N \varphi_j(p) - \int\! \varphi_j d\mu_i \right|
\le
\frac{\varepsilon}{6}. $$
Using that~$f$ is topologically exact, it follows that for every sufficiently
small interval~$K$ containing~$p$, the pull-back~$K_1$ of~$K$ by~$f^N$
containing~$p$ is contained in~$K$.
Reduce~$K$ if necessary, so that~$f^N \colon K_1 \to K$ is a
diffeomorphism, and such that for every~$x\in K_1$ we have
$\frac{1}{N} \log |Df^N(x)| \le \lambda(\mu_i) +
\frac{\varepsilon}{3}$ and for each~$j\in\{1, \ldots, l\}$,
$$ \left| \frac{1}{N} S_N \varphi_j(x) - \int\! \varphi_j d\mu_i
\right|
\le
\frac{\varepsilon}{3}. $$
Set~$k_i \= 1$, $m_i \= N$, $K^i \= K$, and~$K^i_1 \= K_1$.

Take an integer $M \ge 1$ such that for each~$i\in\{1, \ldots, s\}$ we
have~$f^{M} (K^i) = X$, and fix an integer $n \ge 1$.
For each~$i\in\{1, \ldots, s\}$, put
$$ \ell_i \= \left[ \frac{\beta_i n}{m_i} \right]
\text{ and }
n_i \= \ell_i m_i + M, $$
and denote by~$\cL_i$ the collection connected components of~$\left(
  f^{m_i}|_{K^i_1 \cup \cdots \cup K^i_{s(i)}} \right)^{-\ell_i}(K^i)$.
Note that~$\# \cL_i = k_i^{\ell_i}$, and that for each~$L\in\cL_i$ we have~$f^{n_i}(L) = X$.
Furthermore, for each~$x\in L$ we have
\begin{equation}
  \label{eq:1}
  \frac{1}{n_i} \log |Df^{n_i}(x)|
\le
\frac{\ell_i m_i}{n_i} \left(\lambda(\mu_i) +
  \frac{\varepsilon}{3}\right) + \frac{M}{n_i} \log \left(\sup_X|Df|\right),
\end{equation}
and for each~$j\in\{1, \ldots, l \}$ we have
\begin{equation}
  \label{eq:2}
  \left| \frac{1}{n_i} S_{n_i} \varphi_j(x) - \int\! \varphi_j
  d\mu_i \right|
\le \frac{\ell_i m_i}{n_i} \frac{\varepsilon}{3} + \frac{M}{n_i} \sup_X
|\varphi_j|.
\end{equation}

Set~$m \= n_1 + \cdots + n_s$, and note that the sets in
$$ \cL
\=
\left\{ \left(f^{n_1}|_{L_1} \right)^{-1} \circ \cdots \circ \left(f^{n_s}|_{L_s}\right)^{-1}(X) \colon L_1 \in \cL_1, \ldots,
    L_s \in \cL_s \right\} $$
are pairwise disjoint, and that each set in~$\cL$ is mapped
onto~$X$ by~$f^m$.
On the other hand, if~$n$ is sufficiently large, then
\begin{multline*}
\frac{1}{m} \log (\# \cL)
=
\frac{\ell_1 \log k_1 + \cdots + \ell_s \log k_s}{n_1 + \cdots +
  n_s}
\ge
\left( \sum_{i = 1}^s \frac{\beta_i}{m_i} \log k_i \right) - \frac{\varepsilon}{3}
\\ \ge
\left( \sum_{i = 1}^s \beta_i \left( h(\mu_i) - \frac{\varepsilon}{3}
  \right) \right) - \frac{\varepsilon}{3}
=
h(\mu') - \frac{2}{3} \varepsilon
\ge
h(\mu) - \varepsilon.
\end{multline*}
Furthermore, by~\eqref{eq:1}, for each~$L\in\cL$ and~$x\in L$ we
have~$|Df^m(x)| \le e^{(\lambda(\mu) + \varepsilon)m}$, and by~\eqref{eq:2}, for each~$j\in\{1, \ldots, l \}$ we have
$$ \left| \frac{1}{m} S_m \varphi_j(x) - \int\! \varphi_j d\mu \right|
\le
\varepsilon. $$
Note that for each~$L$ in~$\cL$ we have~$|L| \ge e^{-(\lambda
  (\mu) +\varepsilon ) m}$.
  
  Let $n$ be a large integer and 
write $n=pm+q$, where $p$, $q$ are non-negative integers with $0\leq q\leq m-1$.
We have
\begin{multline*}
\frac{1}{n} \log \left| \left\{  x\in X\colon \frac{1}{n}S_n\varphi_j(x)>\alpha_j \text{ for every $j\in\{1,\ldots,l\}$} \right\} \right|
\\
\begin{aligned}
  & \geq
\frac{1}{n} \log \left| \left\{  x\in X\colon \frac{1}{pm}S_{pm}\varphi_j(x)
    > \alpha_j +\varepsilon\text{ for every $j\in\{1,\ldots,l\}$} \right\}  \right|
\\ & \ge
\frac{1}{n} \log  \left( \sum_{L \in \bigvee_{i=0}^{p-1}f^{-im}\mathcal L} \left| L \right|\right)
\\ & \ge
\frac{1}{n}\left(p\log (\# \mathcal L)   -pm( \lambda (\mu) +\varepsilon )\right)
\\ & \ge
\frac{1}{m} \log (\# \mathcal L)   -( \lambda (\mu) +2\varepsilon )
\\ &\ge
h(\mu )- \lambda(\mu) -3\varepsilon.  
\end{aligned}
\end{multline*}
Letting $n\to \infty$ and then $\varepsilon \to 0$ we obtain the desired inequality.
\end{proof}

\begin{proof}[Proof of the large deviations lower bound in the Main Theorem]
Let $f\colon X\to X$ be a map satisfying the hypotheses of
Proposition~\ref{lower-prop}, and~$\mathcal G$ an open subset of $\mathcal M$. 
Note that the topology of $\mathcal M$ has a base consisting of sets of the form 
$$\left\{\nu\in\mathcal M\colon\int\!\varphi_j d\nu> \alpha_j\text{ for every $j\in\{1,\ldots,l\}$}\right\},$$
where $l\geq1$ is an integer, each $\varphi_j\colon X\to\mathbb R$ is a continuous function
and $\alpha_j\in\mathbb R$.
Hence, there exists a collection $\{\mathcal O_\xi\}_\xi$ of sets of this form such that $\mathcal G=\bigcup_{\xi}\mathcal O_\xi$.
Proposition~\ref{lower-prop} applied to each $\mathcal O_\xi$ yields
\begin{align*}
\liminf_{n\to\infty}\frac{1}{n}\log\left|\left\{x\in X\colon\delta_{x}^n\in\mathcal G\right\}\right|
&= \liminf_{n\to\infty}\frac{1}{n}\log
\left|\left\{x\in X\colon\delta_{x}^n\in\bigcup_{\xi} {\mathcal O}_\xi \right\}\right|\\
&\geq\sup_{\xi} \liminf_{n\to\infty}\frac{1}{n}\log
\left|\left\{x\in X\colon\delta_{x}^n\in
{\mathcal O}_\xi\right\}\right| \\
&\ge \sup_{\xi} 
\sup_{ \mathcal O_\xi} F
\\ & =
\sup_{\mathcal G}F
\\ & =
-\inf_{\mathcal G}I.\qedhere
\end{align*}
\end{proof}

\section{The Uniform Scale Lemma}\label{fundamental}
This section is devoted to the proof of the following lemma that is
a key element of the proof of the large deviations upper bound in the
Main Theorem.
The large deviations upper bound is completed in Sect.\ref{s:upper bound}.

For a differentiable map~$g \colon X \to X$ and a subinterval~$J$ of~$X$ that does not contain critical points of~$g$, the \emph{distortion of~$g$ on~$J$} is by definition
$$ \sup \left\{ \frac{|Dg(x)|}{|Dg(y)|} \colon x, y \in J  \right\}. $$

\begin{USL}
Let $f\colon X\to X$ have H{\"o}lder continuous derivative and only
non-flat critical points.
Assume~$f$ is topologically exact.
Then for every $\varepsilon>0$ there exist constants $\eta_0 > 0$, $C>0$, and~$\kappa>0$, such that for every~$\eta \in (0, \eta_0)$ there is $n_0\geq1$ such that the following property holds for every integer $n\geq n_0$.
For every subinterval~$W$ of~$X$ that satisfies $\eta\leq |f^n(W)|\leq 2\eta$, 
there exists a subinterval~$J$ of~$W$ and an integer $m$ such that
$$|J|\geq e^{-\varepsilon n}|W|,\ n\leq m\leq n+C\log n,\ |f^m(J)|\geq \kappa,$$
and such that~$f^m$ maps $J$ diffeomorphically onto $f^m(J)$ with distortion
bounded by~$e^{\varepsilon n}$ (Fig. 1).
\end{USL}

In Sect.\ref{size} we establish one of the main ingredients in the proof of this
lemma, which are some general sub-exponential distortion bounds (Proposition~\ref{p:sub-exponential ratio growth}).
The first type of distortion bound is on the ratio of the sizes of two
iterated intervals, which holds for an arbitrary pull-back that is not necessarily diffeomorphic.
The second one is a sub-exponential distortion bound for diffeomorphic pull-backs with a definite ``Koebe space''.
This last distortion bound is obtained from the Koebe Principle in~\cite{dMevSt93} and a sub-exponential cross-ratio distortion bound.
In Sect.\ref{dimension} we show the abundance of ``safe points''
contained in hyperbolic sets (Lemma~\ref{l:big hyperbolic}).
This is used to apply the method of~\cite{PrzRivSmi03} to
find sub-exponentially small intervals all whose pull-backs by a high iterate
of the map are mapped diffeomorphically to unit scale.
The proof of the Uniform Scale Lemma is given in Sect.\ref{ss:proof
  of USL}.

  \begin{figure}[htb]
\begin{center}
\includegraphics[height=6cm,width=13cm]{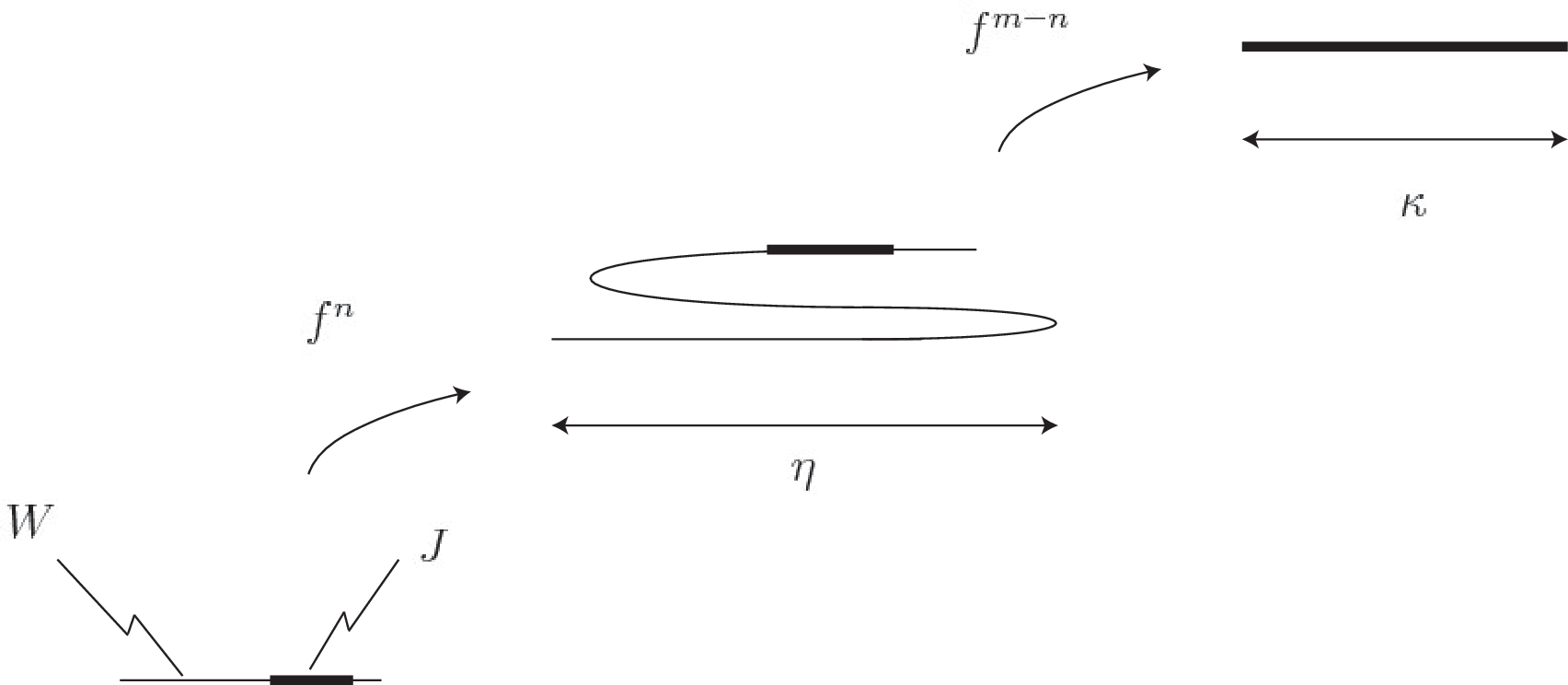}
\caption{On the Uniform Scale Lemma: for a given $\varepsilon>0$ 
one can find two small scales $\eta>0$ and $\kappa>0$ such that for every pull-back $W$ of intervals of size $\eta$
one can choose a subinterval~$J$ of~$W$ that is mapped diffeomorphically to an interval of length $\kappa$ in time $m$,
$n\leq m\leq n+C\log n$.
}
\end{center}
\end{figure}

\subsection{Sub-exponential distortion bounds}
\label{size}
In this section we prove the following proposition giving a
sub-exponential bound on the ratio of the sizes of two iterated
intervals, and a sub-exponential derivative distortion bound for
certain diffeomorphic pull-backs.
 \begin{prop}
\label{p:sub-exponential ratio growth}
Let $f\colon X\to X$ have H{\"o}lder continuous derivative and only
non-flat critical points.
Assume~$f$ is topologically exact.
Then for every $\varepsilon>0$ there exist an integer $n_1\geq1$ and $\eta_0>0$ such that 
for every integer $n\geq n_1$, every subinterval $W$ of $X$ that
satisfies $|f^n(W)|\leq 2\eta_0$, and for every subinterval $J$ of $W$,
$$\frac{|f^n(J)|}{|f^n(W)|}\leq e^{\varepsilon n}\frac{|J|}{|W|}.$$
If in addition~$f^n \colon W \to f^n(W)$ is a diffeomorphism
and~$|f^n(J)| \le \dist(\partial f^n(W), f^n(J))$, then the distortion
of~$f^n$ on~$J$ is bounded by~$e^{\varepsilon n}$.
\end{prop}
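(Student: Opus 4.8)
The plan is to prove both bounds simultaneously by decomposing the orbit $W, f(W), \ldots, f^n(W)$ according to the "scale" of the intervals, and treating separately the iterates where $f^i(W)$ is large (bounded below by a threshold $\delta$) and those where it is small (inside $B(\Crit(f),\delta)$ or of small diameter). The key point is that the number of iterates $i$ for which $f^i(W)$ is \emph{large} is sublinear in $n$ — this is exactly where topological exactness enters, via Lemma~\ref{upbound}. Indeed, fixing $\varepsilon' > 0$ small and applying Lemma~\ref{upbound} with that $\varepsilon'$ produces $\eta$ such that, whenever $|f^n(W)| \le \eta$, every $|f^i(W)| \le \varepsilon'$ for $i < n$. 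Then, by a standard telescoping/pigeonhole argument, among $\{0,1,\ldots,n-1\}$ the set of $i$ at which $f^i(W)$ has size bounded below by some fixed small $\delta$ (equivalently, has "grown" since the last small scale) can be bounded by $C(\delta)\log n$ or even $o(n)$: each time the interval returns to a definite scale it must shrink back again before reaching time $n$, and the multiplicative growth/decay is controlled. (This is the same mechanism used to bound the "middle part" of the inducing time in the Uniform Scale Lemma.)

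First I would dispose of the iterates where $f^i(W)$ is small and avoids a definite neighborhood of the critical set: there $f$ restricted to $f^i(W)$ is a diffeomorphism onto its image with derivative bounded away from $0$ and $\infty$ and with distortion controlled by the H\"older modulus of $Df$ times the length $|f^i(W)| \le \varepsilon'$; summing these contributions geometrically gives a factor that is at most $e^{\varepsilon n/3}$ once $\varepsilon'$ is small, uniformly in the choices. Near a critical point $c$ of non-flat type with exponent $\ell$, the local model $|\psi \circ f(x)| = |\phi(x)|^\ell$ shows that passing through $c$ distorts the ratio of sizes by a bounded power law: $|f(J)|/|f(W)|$ and $|J|/|W|$ differ by at most a universal constant factor (depending on $\ell$ and the $\phi,\psi$), and there are only finitely many critical points and boundedly many ``critical passages'' can occur at scales where the interval is not small — but at small scales the number of passages near any fixed $c$ during $[0,n)$ is again $o(n)$ by the same return argument. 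Collecting: the total multiplicative error on $|f^n(J)|/|f^n(W)| \cdot |W|/|J|$ is at most $e^{\varepsilon n/3}$ from the diffeomorphic small-scale pieces, times $e^{\varepsilon n/3}$ from the $o(n)$ large-scale pieces (each contributing a bounded factor), times $e^{\varepsilon n/3}$ from the $o(n)$ critical passages. Choosing $n_1$ large and $\eta,\delta,\varepsilon'$ appropriately small yields the first inequality.

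For the second statement, I would use that $f\colon W \to f^n(W)$ is now a genuine diffeomorphism, so no critical point is ever encountered and the entire orbit of $W$ consists of small intervals (size $\le \varepsilon'$) with the sole exception of an $o(n)$ set of iterates. On the small-scale diffeomorphic iterates the distortion of a single step of $f$ on $f^i(W)$ is $1 + O(|f^i(W)|^{\alpha})$ where $\alpha$ is the H\"older exponent of $Df$; since $\sum_i |f^i(W)|^\alpha$ is controlled by a geometric-type sum (intervals shrink back after each excursion) this contributes a bounded — in fact $e^{o(n)}$, and with care $e^{\varepsilon n/2}$ — distortion factor on $J$. The large-scale iterates would be handled by the Koebe Principle from \cite{dMevSt93}: the hypothesis $|f^n(J)| \le \dist(\partial f^n(W), f^n(J))$ gives a definite Koebe space at the final scale, which pulls back under the diffeomorphism $f^n$, and combined with a \emph{cross-ratio} distortion estimate — cross-ratios are distorted by $f$ at each step by a factor $1+O(\ell_i)$ where $\sum \ell_i$ over the orbit is again summable up to $o(n)$ terms — one gets that the cross-ratio distortion of $f^n$ on $(J \subset W)$ is $e^{o(n)} \le e^{\varepsilon n/2}$; feeding this into Koebe converts it into the asserted derivative distortion bound $e^{\varepsilon n}$ on $J$.

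The main obstacle I expect is making the ``$o(n)$ large-scale iterates'' count fully rigorous and uniform over all $W$ simultaneously — in particular ensuring that the number of times $f^i(W)$ returns to a definite scale $\delta$ before time $n$, and the number of critical passages at scale $\ge \delta$, are both genuinely sublinear (and ideally $O(\log n)$) rather than merely ``small''. This requires a quantitative statement: once $|f^i(W)| \ge \delta$ there is a uniform number of steps $T(\delta,\varepsilon')$ within which $|f^j(W)|$ returns below $\varepsilon'$ (from Lemma~\ref{upbound} applied at an intermediate scale, or from a compactness argument), so each such excursion ``costs'' at least a definite shrinkage afterward; since the total possible shrinkage is bounded by $\log(1/|f^n(W)|) \le \log(1/\eta)$ plus the total growth, and the growth per step is bounded, a bootstrap gives the excursion count is $O(\log n)$. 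Pinning down this bootstrap cleanly, and checking that the cross-ratio machinery of \cite{dMevSt93} tolerates the finitely many non-flat critical points in the non-diffeomorphic case, are the delicate points; everything else is bookkeeping with the H\"older-distortion and non-flatness local models.
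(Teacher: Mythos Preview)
Your overall architecture --- split each step into ``near $\Crit(f)$'' versus ``away from $\Crit(f)$,'' use the non-flat local model (Lemma~\ref{non-flat}) for the former and H\"older control of $Df$ for the latter, then cross-ratio plus the Koebe Principle for the second assertion --- is exactly what the paper does. But two things diverge, and the second is a genuine gap.

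First, there are no ``large-scale iterates'' to count. Lemma~\ref{upbound} (applied with a small $\kappa$ in the role of its $\varepsilon$) produces $\eta>0$ so that $|f^n(W)|\le 2\eta$ forces $|f^i(W)|\le\kappa$ for \emph{every} $i\in\{0,\ldots,n-1\}$. So the whole excursion/return bookkeeping you sketch is unnecessary; every iterate is already small, and the H\"older step is just: pick $\kappa$ so that one step of $f$ on an interval of length $\le\kappa$ disjoint from $B(\Crit(f),\delta/2)$ distorts the ratio (and the cross-ratio) by at most $e^{\varepsilon/24}$, then multiply $n$ such factors.

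Second --- this is the missing idea --- bounding the number of passages near $\Crit(f)$ is not a size-based return argument; the intervals are \emph{always} small, so nothing in your scheme prevents them from visiting $B(\Crit(f),\delta/2)$ at every step. The paper's mechanism is combinatorial. Since $f$ is topologically exact it has no periodic critical point, so the directed graph on $\Crit(f)$ (an edge $c_0\to c_1$ labeled $k$ when $f^k(c_0)=c_1$ and $f(c_0),\ldots,f^{k-1}(c_0)\notin\Crit(f)$) is acyclic; let $E$ be the maximal label-sum along any path. One first fixes $n_1$ large (so that $C_0^{2E/n_1}\le e^{\varepsilon/12}$, where $C_0$ comes from Lemma~\ref{non-flat}) and then $\delta$ small enough that $\bigcup_{j=1}^{n_1}f^j(B(\Crit'(f),\delta))$ misses $B(\Crit(f),\delta/2)$, where $\Crit'(f)$ is the set of ``tail'' critical points (those with no outgoing edge). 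This yields
\[
\#\bigl\{\,i<n:\ f^i(W)\cap B(\Crit(f),\tfrac{\delta}{2})\ne\emptyset\,\bigr\}\ \le\ E\Bigl(\frac{n}{n_1}+1\Bigr)\ \le\ \frac{2En}{n_1},
\]
a \emph{linear} bound with arbitrarily small slope --- not $o(n)$ or $O(\log n)$ --- and exactly what makes the product telescope to $C_0^{2En/n_1}e^{\varepsilon n/12}\le e^{\varepsilon n}$. With this count in hand the Koebe step is as you say: the cross-ratio of $(J\subset W)$ is distorted by $f^n$ by at most $C_0^{2En/n_1}e^{\varepsilon n/12}$, and Koebe with space $\tau=1$ converts that into the derivative bound $e^{\varepsilon n}$ on $J$.
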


For the proof of this proposition we need the next lemma, in which we
use the assumption that each critical point is non-flat.
To state this lemma, we use the concept of ``cross-ratio'' that we proceed to
recall.
Given a subinterval~$\hJ$ of~$\R$ and an interval~$J$ whose closure is contained in the
interior of~$\hJ$, denote by~$L$ and~$R$ the connected components
of~$\hJ \setminus J$.
Then the \emph{cross-ratio $\Cr(\hJ;J)$ of~$\hJ$ and~$J$} is defined by
$$ \Cr(\hJ;J) \= \frac{|\hJ||J|}{|L||R|}. $$

\begin{lemma} \label{non-flat}
Let $f\colon X\to X$ be continuously differentiable with only non-flat critical points.
Then there exist constants $C_0>1$ and $\delta_0>0$ such that 
 for every interval $\hU$ contained in $B(\Crit(f), \delta_0)$, and every subinterval $U$ of $\hU$,  
\begin{equation*} \frac{|f(U)|}{ |f(\hU)|}
\leq C_0\frac{ |U| }{ |\hU|}. \end{equation*}
If in addition~$\hU$ is disjoint from~$\Crit(f)$ and the closure of~$U$ is contained
in the interior of~$\hU$, then
$$ \Cr(f(\hU);f(U)) \ge C_0^{-1} \Cr(\hU;U). $$
\end{lemma}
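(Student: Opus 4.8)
The plan is to reduce everything to local normal forms near the (finitely many) critical points. First I would fix the non-flat structure: for each critical point $c$ there are $\ell_c > 1$ and $C^1$-diffeomorphisms $\phi_c, \psi_c$ of $\R$ with $\phi_c(c) = \psi_c(f(c)) = 0$ and $|\psi_c \circ f(x)| = |\phi_c(x)|^{\ell_c}$ on a neighborhood of $c$. Choose $\delta_0 > 0$ small enough that the $\delta_0$-neighborhoods of distinct critical points are disjoint and each is contained in the corresponding normal-form neighborhood; then any interval $\hU \subset B(\Crit(f), \delta_0)$ meets at most one critical point $c$, and it suffices to prove both inequalities for such a $\hU$ with constants depending only on $c$, taking $C_0$ to be the maximum over the finitely many critical points (and, say, $C_0 \ge 2$). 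Since $\phi_c$ and $\psi_c$ are $C^1$-diffeomorphisms of $\R$ restricted to a compact neighborhood, they distort lengths of subintervals by a bounded factor; so, after replacing $f$ near $c$ by the power map $t \mapsto |t|^{\ell_c}$ (i.e. working in the coordinates $\phi_c$, $\psi_c$), both claimed inequalities become statements about the map $p_\ell(t) = |t|^\ell$ on an interval, up to an extra bounded multiplicative constant absorbed into $C_0$.

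For the power map $p_\ell$ the first inequality is elementary: for an interval $\hU$ with endpoints $a < b$ (in $\phi_c$-coordinates) and a subinterval $U$, one has $|p_\ell(U)|/|p_\ell(\hU)| \le \ell_c \cdot \max(|a|,|b|)^{\ell_c - 1}|U| / |p_\ell(\hU)|$ by the mean value theorem on the numerator, while $|p_\ell(\hU)| \ge$ (a constant depending on $\ell_c$) times $\max(|a|,|b|)^{\ell_c - 1}|\hU|$ — this lower bound holds whether or not $\hU$ straddles $0$, by splitting $\hU$ at $0$ and noting the larger half carries a definite proportion of the $p_\ell$-image and a definite proportion of the length. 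Combining gives $|p_\ell(U)|/|p_\ell(\hU)| \le C(\ell_c) |U|/|\hU|$, which is the desired estimate. Transporting back through $\phi_c, \psi_c$ costs another bounded factor, proving the first assertion.

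For the cross-ratio inequality, now $\hU$ is disjoint from $\Crit(f)$, so $\hU$ lies entirely on one side of $0$ in $\phi_c$-coordinates, and $p_\ell$ restricted to $(0,\infty)$ (or $(-\infty,0)$) is a smooth diffeomorphism. The cross-ratio is a conformal invariant in dimension one only up to controlled error, and the standard fact is precisely that a map with nonnegative Schwarzian expands cross-ratios; $p_\ell|_{(0,\infty)}$ does \emph{not} have negative Schwarzian — in fact $t \mapsto t^\ell$ has Schwarzian of sign $(1-\ell^2)/2 < 0$, so it \emph{contracts} cross-ratio, but only by a factor bounded below in terms of $\ell_c$ and the ratio of the endpoints of $\hU$, which on $B(\Crit(f),\delta_0)$ is bounded. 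Concretely I would compute $\Cr(p_\ell(\hU); p_\ell(U))$ directly for $\hU = [a,b] \subset (0,\infty)$ and deduce $\Cr(p_\ell(\hU);p_\ell(U)) \ge C(\ell_c)^{-1}\Cr(\hU;U)$ using that $x \mapsto x^\ell$ has bounded logarithmic distortion of increments on any interval of bounded multiplicative width; then push back through the $C^1$-diffeomorphisms $\phi_c, \psi_c$, which themselves distort cross-ratios by a bounded factor since a $C^1$-diffeomorphism of a compact interval has cross-ratio distortion controlled by its modulus of continuity of the derivative (and for the cleanest bound one may even note $f$ has Hölder derivative, though $C^1$ suffices here). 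Taking $C_0$ larger than all these finitely many constants completes the proof.

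The main obstacle I anticipate is the cross-ratio estimate: unlike the size-ratio inequality, it is not monotone under composition with arbitrary $C^1$-diffeomorphisms in an obvious way, and one must be careful that the bound degenerates as $\hU$ shrinks toward the critical point. The resolution is that $\Cr$ is scale-invariant and the power map $p_\ell$ is scale-equivariant ($p_\ell(\lambda t) = \lambda^\ell p_\ell(t)$), so the estimate for $p_\ell$ depends only on the \emph{shape} of the configuration $(\hU, U)$, not its size; the diffeomorphism factors $\phi_c, \psi_c$ are then applied on a fixed compact neighborhood where their cross-ratio distortion is uniformly bounded. Handling the degenerate configurations where $U$ abuts an endpoint of $\hU$ (so one of $L, R$ has zero length) is vacuous since then both cross-ratios are $+\infty$; the hypothesis that the closure of $U$ lies in the interior of $\hU$ rules this out anyway.
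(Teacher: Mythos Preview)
Your reduction to the power map $p_\ell(t)=|t|^\ell$ via the coordinate charts $\phi_c,\psi_c$ is exactly the paper's approach, and your argument for the size-ratio inequality is essentially a repackaging of the paper's four-case analysis; that part is fine.

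The cross-ratio part, however, contains a genuine error that propagates into a gap. You have the sign convention reversed: with the cross-ratio defined as $\Cr(\hJ;J)=|\hJ||J|/(|L||R|)$ (as in the paper, following de~Melo--van~Strien), it is maps with \emph{negative} Schwarzian derivative that \emph{expand} cross-ratios. You correctly compute that $t\mapsto t^\ell$ has Schwarzian $(1-\ell^2)/(2t^2)<0$ on $(0,\infty)$, but then draw the wrong conclusion. In fact $\Cr(p_\ell(\hU);p_\ell(U))\ge \Cr(\hU;U)$ holds outright, with constant~$1$; this is precisely how the paper dispatches the second inequality in one line, citing \cite[Sect.~IV.1, Property~4]{dMevSt93}. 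Your fallback argument---bounding the contraction via ``bounded multiplicative width'' of $\hU$---does not work as stated: the hypothesis only places $\hU$ inside $B(\Crit(f),\delta_0)\setminus\Crit(f)$, so in $\phi_c$-coordinates $\hU$ can be $[\epsilon,\delta_0/2]$ with $\epsilon\to 0$, and the endpoint ratio is unbounded. The scale-equivariance observation you make later is correct and would let a direct computation succeed (indeed it would reveal the expansion), but the ``bounded logarithmic distortion on intervals of bounded multiplicative width'' route does not close without that computation. Once you correct the Schwarzian sign, the proof collapses to the paper's: negative Schwarzian gives the cross-ratio inequality for $p_\ell$ with $C_0=1$, and the bounded distortion of the $C^1$-charts $\phi_c,\psi_c$ on a fixed compact neighborhood (which, as you note, distort cross-ratios by at most a factor depending on $\sup|D\phi_c|/\inf|D\phi_c|$ and similarly for $\psi_c$) supplies the final constant for $f$ itself.
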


\begin{proof}
Let $c\in\Crit(f)$.
By the definition of non-flatness, 
there exist a number $\ell>1$ and diffeomorphisms~$\phi$ and~$\psi$ of~$\R$ such that  $\phi(c)=\psi(f(c))=0$
and $g=\psi\circ f\circ \phi^{-1}$ satisfies $|g(x)|=|x|^{\ell}$ for $x$ near $0$.
It is thus enough to prove the lemma with~$f$ replaced by~$g$.
For~$g$, the second inequality with~$C_0 = 1$ is given by~\cite[Property~4 in~Sect.IV.1]{dMevSt93} by noting that the Schwarzian derivative of~$g$ is
negative on~$\R \setminus \{ 0 \}$.
To prove the first inequality we treat four cases separately.
\medskip

\noindent\emph{Case 1:} $0\in U$. We have $(|U|/2)^{\ell}\leq |g(U)|\leq |U|^{\ell}$.
Since $0\in\widehat U$ we also have
$(|\widehat U|/2)^{\ell}\leq |g(\widehat U)|\leq |\widehat U|^{\ell}$.
Then $|g(U)|/|g(\widehat U)|\leq(2|U| / |\hU|)^{\ell}< 2^{\ell}|U| / |\hU|.$
\medskip

\noindent\emph{Case 2: $0\notin U$ and $0\in\widehat U$}. By the mean value theorem and the form of $g$,
there is~$\xi$ in~$U$ such that $|g(U)|=|Dg(\xi)|\cdot|U|\leq \ell|\widehat U|^{\ell-1}\cdot|U|$.
Combining this with the lower estimate of $|g(\widehat U)|$ in Case~1 yields
$|g(U)|/|g(\widehat U)|\leq 2^{\ell}\ell|U| / |\hU|.$
\medskip

\noindent\emph{Case 3: $0\notin \widehat U$ and $|\widehat U|\leq\dist(0,\widehat U)$.}
The mean value theorem gives
$|g(U)|=|Dg(\xi)|\cdot|U|$ and
$|g(\widehat U)|=|Dg(\eta)|\cdot|\widehat U|$ for some $\xi\in U$ and $\eta\in\hU$.
The assumption $|\widehat U|\leq\dist(0,\widehat U)$ implies $|\xi/\eta|\leq2$, and so
$|g(U)|/|g(\widehat U)|\leq 2^{\ell-1}|U| / |\hU|.$
\medskip

\noindent\emph{Case 4: $0\notin \widehat U$ and $|\widehat U|>\dist(0,\widehat U)$.}
Let $V$ denote the smallest closed interval containing $\widehat U$ and $0$.
We have
$|g(V)|=|g(\widehat U)|+|g(V\setminus \widehat U)|<2|g(\widehat U)|.$
Using this and the estimate in Case~2 for the pair $(U,V)$ yields
\begin{displaymath}
  |g(U)|/|g(\widehat U)|
  <
  (1/2)|g(U)|/|g(V)|
  \leq
  2^{\ell-1}\ell|U|/|V|
  <
  2^{\ell-1}\ell|U|/|\widehat U|.
  \qedhere
\end{displaymath}
\end{proof}

In the proof of Proposition~\ref{p:sub-exponential ratio growth}
we also use general properties of topologically exact maps.
First, notice that from the compactness of~$X$, for every continuous
and topologically exact map~$f \colon X \to X$ and each~$\gamma > 0$
there is an integer~$N \ge 1$ such that for every
subinterval~$J$ of~$X$ with $|J| \ge \gamma$,
we have $f^N(J)=X$; we denote by~$N(\gamma)$ the smallest such integer.

\begin{lemma}\label{upbound}
Let $f\colon X\to X$ be a continuous map that is topologically exact.
Then for every $\varepsilon>0$ there exists $\eta\in(0,1/2)$ such that for
every integer $n\geq 1$ and every subinterval~$W$ of~$X$ that satisfies $|f^n(W)|\leq\eta$,
 $|f^i(W)| \le \varepsilon$ holds for every $i\in\{0,\ldots,n-1\}$.
\end{lemma}

\begin{proof} 
Let $\eta \in (0, 1/2)$ be such that for every subinterval $V$ of $X$ that satisfies $|V|\le \eta$,
$|f^i (V)|\le 1/2$ holds for every $i\in\{0,\ldots , N(\varepsilon )-1\}$.
Let $n\ge 1$ be an integer and $W$ a subinterval of $X$ such that $|f^n(W)|\le \eta $.
If $|f^{i_0} (W)| > \varepsilon$ holds for some $i_0\in \{ 0, \ldots , n-1 \},$
then the definition of $N(\varepsilon)$ gives
$ f^{N(\varepsilon)} (f^{i_0}(W)) = X$.
Since $f(X)=X$ we get
$f^{N(\varepsilon)-1} (f^n(W)) = X,$
and this contradicts the choice of $\eta$
with $V=f^n(W)$.
\end{proof}

\begin{proof}[Proof of Proposition~\ref{p:sub-exponential ratio growth}]
In order to treat critical relations that can arise in the case $\#\Crit(f)\linebreak\geq2$ we introduce the following notion.
We say $c\in\Crit(f)$ is a \emph{tail} if $f^n(c)\notin\Crit(f)$ holds for every $n\geq1$.
Let $\Crit'(f)$ denote the set of tails.

Consider a graph made up of vertices and oriented edges between them.
The vertices are critical points of $f$. For two vertices $c_0$ and $c_1$ put an edge from
$c_0$ to $c_1$ if there exists an integer $n\geq1$
such that $f(c_0),f^2(c_0),\ldots,f^{n-1}(c_0)\notin\Crit(f)$ and $f^n(c_0)=c_1$.
The edge is labeled with $n$.
By definition, there is at most one outgoing edge from each vertex.
Since no critical point is periodic, there is no loop in the graph.
The concatenation of edges groups the set of vertices into blocks,
which might intersect.
For each block consider the sum of labels of all its edges.
Let $E$ denote the maximal sum over all blocks.
Let $\varepsilon>0$ be given and let~$C_0$ and~$\delta_0$ be the
constants given by Lemma~\ref{non-flat}. 
Choose a sufficiently large integer $n_1 \ge 1$ such
that~$e^{\varepsilon n_1 /12} \ge 2C_0^{2E}$.
Let $\delta\in(0,\delta_0)$ be such that
the set $\bigcup_{j=1}^{n_1}f^j(B(\Crit'(f), \delta))$ is disjoint from $B(\Crit(f), \delta/2)$.

Since~$f$ is continuously differentiable, there is $\kappa\in(0,\delta / 2)$ such that
for every interval~$U$ contained in~$X \setminus B(\Crit(f), \delta/2)$ that satisfies~$|U| \le \kappa$, 
\begin{equation}\label{distort}
\sup_{x,y\in U}\frac{|Df(x)|}{|Df(y)|}\leq e^{\frac{\varepsilon}{24}}.
\end{equation}
Finally, in view of Lemma~\ref{upbound} we can choose $\eta_0 > 0$
 such that for every~$\eta \in (0, \eta_0)$, every $x\in X$, every integer $n\geq 1$ and every pull-back $W$
of $B(x,\eta)$ by $f^n$, $|f^j(W)| \le \kappa$ holds for every $j\in\{0,\ldots,n-1\}$.
Note that by our choices of~$n_1$ and~$\delta$, it follows that 
\begin{equation}
  \label{eq:5}
  \#\{j\in\{0, \ldots, n - 1 \}\colon\text{$f^j(W)\cap B(\Crit(f),
    \delta/2)\neq\emptyset$}\}\leq E \left(\frac{n }{ n_1} + 1\right)
\le
\frac{2En}{n_1}.
\end{equation}

Let~$n \ge n_1$, $\eta \in (0, \eta_0)$, $W$ a pull-back of $B(x,\eta)$ by $f^n$ and~$J$ a subinterval of $W$.
For every~$j\in\{0, \ldots, n - 1 \}$ we have~$|f^j(W)| \le \kappa$.
Thus, if in addition~$f^j(W)$ is disjoint from~$B(\Crit(f), \delta/2)$, then \eqref{distort} gives
$$\frac{ |f^{j + 1}(J)| }{ |f^{j + 1}(W)|}
\le
e^{\frac{\varepsilon}{24}}\frac{ |f^j(J)| }{ |f^j(W)|}. $$
If in addition~$f^j(W)$ is disjoint from~$\Crit(f)$, then for every
subinterval~$\hU$ of~$f^j(W)$ and every interval~$U$ whose closure is contained in the
interior of~$\hU$, 
$$ \Cr(f^{j + 1}(\hU);f^{j + 1}(U))
\ge
e^{- \frac{\varepsilon}{12}} \Cr(f^j(\hU);f^j(U)). $$
Suppose now~$j\in\{0, \ldots, n - 1 \}$ is such that~$f^j(W)$ intersects~$B(\Crit(f), \delta/2)$.
Since~$\kappa \in(0, \delta/ 2)$, the interval~$f^j(W)$ is contained in~$B(\Crit(f), \delta)$, and by Lemma \ref{non-flat} we have
$$\frac{ |f^{j + 1}(J)| }{ |f^{j + 1}(W)|}
\le
C_0\frac{ |f^j(J)| }{ |f^j(W)|}. $$
If in addition~$f^j(W)$ is disjoint from~$\Crit(f)$, then for every
subinterval~$\hU$ of~$f^j(W)$ and every interval~$U$ whose closure is contained in the
interior of~$\hU$, 
$$ \Cr(f^{j + 1}(\hU);f^{j + 1}(U))
\ge
C_0^{-1} \Cr(f^j(\hU);f^j(U)). $$
Therefore, by our choice of~$n_1$ and~\eqref{eq:5} we have
$$\frac{ |f^n(J)|} { |f^n(W)|}
\le
C_0^{\frac{2En}{n_1}} e^{\frac{\varepsilon}{12} n}
\le
e^{\varepsilon n} \frac{|J| }{ |W|}, $$
which gives the first assertion of the proposition.

To prove the second assertion of the proposition, suppose~$f^n \colon W
\to f^n(W)$ is a diffeomorphism.
Then for every subinterval~$\hU$ of~$W$ and interval~$U$ whose closure is contained in the interior of~$\hU$,
$$ \frac{\Cr(f^n(\hU);f^n(U))}{\Cr(\hU;U)}
=
\prod_{j = 0}^{n - 1} \frac{\Cr(f^{j + 1}(\hU);f^{j +
    1}(U))}{\Cr(f^j(\hU);f^j(U))}
\ge
C_0^{-\frac{2En}{n_1}} e^{- \frac{\varepsilon}{12} n}
\ge
2 e^{- \frac{\varepsilon}{6} n}. $$
The Koebe Principle \cite[Theorem~IV.1.2]{dMevSt93} with~$\tau =
1$ implies that the distortion of~$f^n$ on~$J$ is bounded
by~$e^{\varepsilon n}$.
This completes the proof of the proposition.
\end{proof}

\subsection{Abundance of safe points in hyperbolic sets}\label{dimension}
Let~$f \colon X \to X$ be a differentiable interval map with at
most a finite number of critical points.
In order to carefully avoid critical points and choose diffeomorphic
pull-backs, we use the method introduced in~\cite{PrzRivSmi03}.
We adopt the terminology of ``safe points''  in \cite[Definition~12.5.7]{PrzUrb10}.
For a given $\alpha>0$ and an integer $n\geq1$ define
$$E_n(\alpha)=\bigcup_{j=1}^\infty B(f^j(\Crit(f)),\min\{n^{-\alpha },j^{-\alpha}\}).$$
Note that the set $E_n(\alpha)$ is decreasing in $n$. Set
$$E(\alpha)=\bigcap_{n=1}^\infty E_n(\alpha).$$
Note that $E(\alpha)$ contains $\bigcup_{j=1}^\infty f^j(\Crit(f))$.

We say $x\in X$ is $\alpha$-\emph{safe} if $x\notin E(\alpha)$.
If $x$ is $\alpha$-\emph{safe}, then for every integer
 $n\geq 1$ with $x\notin E_n(\alpha)$ the ball $B(x,n^{-\alpha})$ is disjoint from
$\bigcup_{j=1}^n f^j(\Crit(f)).$ Hence,
the pull-backs of $B(x,n^{-\alpha})$ by $f^n$ are diffeomorphic.

\begin{lemma}\label{HD}
For every $\alpha>0$,
$\HD(E(\alpha))\leq \alpha^{-1}$.
\end{lemma}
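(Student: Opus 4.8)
The plan is to bound the Hausdorff dimension of $E(\alpha)$ by a direct covering argument, exploiting the fact that $E(\alpha) \subset E_n(\alpha)$ for every $n$, so it suffices to cover $E_n(\alpha)$ efficiently for large $n$ and let $n \to \infty$. Recall that
$$
E_n(\alpha) = \bigcup_{j=1}^{\infty} B\bigl(f^j(\Crit(f)), \min\{n^{-\alpha}, j^{-\alpha}\}\bigr).
$$
Since $\Crit(f)$ is finite, say of cardinality $N$, the set $f^j(\Crit(f))$ consists of at most $N$ points, so for each $j$ the piece $B(f^j(\Crit(f)), \min\{n^{-\alpha}, j^{-\alpha}\})$ is covered by $N$ intervals of length $2\min\{n^{-\alpha}, j^{-\alpha}\}$.

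First I would split the union over $j$ at the threshold $j = n$. For $1 \le j \le n$, the radius is $n^{-\alpha}$, giving at most $Nn$ intervals of length $2n^{-\alpha}$; for $j > n$, the radius is $j^{-\alpha}$, giving $N$ intervals of length $2j^{-\alpha}$ for each such $j$. Now fix an exponent $s > \alpha^{-1}$, i.e.\ $s\alpha > 1$. The $s$-dimensional "cost" of the first group is at most $Nn \cdot (2n^{-\alpha})^s = 2^s N n^{1 - s\alpha}$, which tends to $0$ as $n \to \infty$ precisely because $s\alpha > 1$. The cost of the second group is at most $\sum_{j > n} N (2j^{-\alpha})^s = 2^s N \sum_{j > n} j^{-s\alpha}$, which is the tail of a convergent series (again since $s\alpha > 1$) and hence also tends to $0$ as $n \to \infty$. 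Since the individual covering intervals all have length $\le 2n^{-\alpha} \to 0$, this shows that the $s$-dimensional Hausdorff measure of $E(\alpha)$ is $0$, hence $\HD(E(\alpha)) \le s$. Letting $s \downarrow \alpha^{-1}$ gives $\HD(E(\alpha)) \le \alpha^{-1}$.

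There is no real obstacle here; the only point requiring a modicum of care is making sure the diameters of the covering intervals go to zero uniformly (so that the estimate genuinely bounds the Hausdorff measure $\mathcal{H}^s$ and not merely a pre-measure at a fixed scale), which is automatic since every interval used to cover $E_n(\alpha)$ has length at most $2n^{-\alpha}$. One should also note that $f^j(\Crit(f))$ might have fewer than $N$ points or that different $j$'s might produce overlapping balls, but this only helps: our bound is an overcount, and an overcount suffices for an upper bound on dimension. Thus the argument is a routine application of the definition of Hausdorff dimension via the convergence exponent of the covering series.
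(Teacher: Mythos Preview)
Your proof is correct and is essentially identical to the paper's own argument: both cover $E(\alpha)\subset E_n(\alpha)$ by the balls $B(f^j(c),\min\{n^{-\alpha},j^{-\alpha}\})$, split the sum of $s$-th powers of diameters at $j=n$, and use $s\alpha>1$ to make both pieces vanish as $n\to\infty$. Your added remark that all covering intervals have diameter $\le 2n^{-\alpha}\to 0$ makes explicit a point the paper leaves implicit.
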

\begin{proof}
 For each $n$ consider the covering of $E(\alpha)$
by the intervals $$B(f^j(c),\min\{n^{-\alpha},j^{-\alpha}\}),
\ c\in\Crit(f),\ j\in\{1,2,\ldots\}.$$
Let $\beta>\alpha^{-1}$. We have
\begin{align*}
\sum_{c\in\Crit(f)}\sum_{j=1}^\infty |B(f^j(c),\min\{n^{-\alpha},j^{-\alpha}\})|^\beta&=
\sum_{c\in\Crit(f)}\left(\sum_{j=1}^n 
+\sum_{j=n+1}^\infty \right)\\
&\leq \#\Crit(f)\cdot\left(2^\beta n^{1-\alpha\beta}+\sum_{j=n+1}^\infty 2^{\beta}j^{-\alpha\beta}\right).
\end{align*}
This number goes to $0$ as $n\to\infty$, and so
the Hausdorff $\beta$-measure of $E(\alpha)$ is $0$.
Since $\beta>\alpha^{-1}$ is arbitrary we obtain $\HD(E(\alpha))\leq\alpha^{-1}$.
\end{proof}

\begin{lemma}
\label{l:big hyperbolic}
Let $f \colon X \to X$ have H{\"o}lder continuous derivative and at most
a finite number of critical points.
If~$f$ is topologically exact, then there is~$\alpha > 0$ such that the following property holds.
For every~$\eta > 0$ there is a hyperbolic set~$\Lambda$ of~$f$ such that
for every~$x \in X$, the set $B(x,\eta)\cap \Lambda$ is nonempty and contains an $\alpha$-safe point.
\end{lemma}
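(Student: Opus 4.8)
The plan is to first fix the constant $\alpha$ so that $\alpha$-safe points are abundant, then produce a single hyperbolic set that is $\eta$-dense and meets every $\eta$-ball in an $\alpha$-safe point. For the choice of $\alpha$: by Lemma~\ref{HD} we have $\HD(E(\alpha)) \le \alpha^{-1}$, so as long as we take $\alpha > 1$ the set $E(\alpha)$ of non-safe points has Hausdorff dimension strictly less than $1$, hence empty interior and in fact is ``negligible'' in the sense that its complement meets every subinterval of $X$ in a set of full Hausdorff dimension. But more is needed: we want safe points lying on a hyperbolic set. The mechanism is that topological exactness lets us build hyperbolic sets with large Hausdorff dimension — indeed dimension arbitrarily close to the dimension of a suitable conformal repeller, which can be taken close to $1$. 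Concretely, I would invoke (or quote from the surrounding literature, e.g. \cite{PrzRivSmi03,PrzUrb10}) the existence, for every small $\rho>0$, of a hyperbolic Cantor set $\Lambda_\rho \subset X$ with $\HD(\Lambda_\rho) > 1 - \rho$. Choosing $\rho$ with $1 - \rho > \alpha^{-1}$, we then have $\HD(\Lambda_\rho) > \HD(E(\alpha))$, so $\Lambda_\rho \setminus E(\alpha) \ne \emptyset$; in fact $\Lambda_\rho$ cannot be contained in $E(\alpha)$ on any piece.

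For the $\eta$-density and the ``every ball'' requirement, the idea is to spread a single small hyperbolic Cantor set around using topological exactness. First pick any basic hyperbolic set $\Lambda_0$ (for instance a horseshoe with at least two branches, which exists because $f$ is topologically exact and so has positive topological entropy; such a $\Lambda_0$ can be arranged to have $\HD(\Lambda_0) > \alpha^{-1}$ by iterating and refining, using that $f$ restricted to a neighborhood is expanding there). Shrink $\Lambda_0$ into a tiny interval $I_0$ with $|I_0|$ small. Now for a fixed $\eta>0$, choose a finite $\eta/2$-dense set of points $x_1,\dots,x_r$ in $X$; by topological exactness there are integers $N_1,\dots,N_r$ with $f^{N_k}(B(x_k,\eta/2)) = X \supset I_0$, and hence a pull-back $I_0^{(k)} \subset B(x_k,\eta/2)$ of $I_0$ by $f^{N_k}$ on which $f^{N_k}$ is, after possibly shrinking $I_0$, a diffeomorphism onto $I_0$ (using Lemma~\ref{upbound} and bounded distortion on short intervals to keep control). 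Pulling $\Lambda_0$ back through each of these branches and taking the union gives a set $\Lambda' = \bigcup_k (f^{N_k}|_{I_0^{(k)}})^{-1}(\Lambda_0)$; adjoining the finitely many forward iterates to make it forward invariant and invoking that finite unions and preimages of hyperbolic sets under the (eventually) expanding map $f$ remain hyperbolic, we get a hyperbolic set $\Lambda$ meeting every $B(x,\eta)$ (since the $x_k$ are $\eta/2$-dense and $\Lambda$ meets each $B(x_k,\eta/2)$).

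It remains to ensure each $B(x,\eta)\cap\Lambda$ contains an $\alpha$-safe point. Here I would use that each local piece of $\Lambda$ lying in $B(x_k,\eta/2)$ is a bi-Lipschitz (indeed quasiconformal, by bounded distortion) image of $\Lambda_0$, hence has Hausdorff dimension $> \alpha^{-1} \ge \HD(E(\alpha))$. Therefore this piece is not contained in $E(\alpha)$, so it contains a point outside $E(\alpha)$, i.e.\ an $\alpha$-safe point; and that point lies in $B(x_k,\eta/2) \subset B(x,\eta)$ for any $x$ with $|x - x_k| \le \eta/2$. Since the $x_k$ are $\eta/2$-dense, every $x\in X$ has such a $k$, completing the argument.

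The main obstacle, I expect, is the construction of the hyperbolic set with Hausdorff dimension exceeding $\alpha^{-1}$ purely from topological exactness, while simultaneously controlling distortion so that the pulled-back copies still have large dimension: one must iterate $f$ to gain expansion, choose an appropriate return/horseshoe structure avoiding critical points (which is exactly where Lemma~\ref{HD} and the safe-point machinery re-enter), and verify that the resulting Cantor set's dimension can be pushed above any prescribed threshold below $1$. The bookkeeping to guarantee that the finitely many pull-back branches are genuinely diffeomorphic — hence distortion-controlled via Proposition~\ref{p:sub-exponential ratio growth} or the Koebe Principle on short intervals — and that the union remains hyperbolic, is routine but must be done carefully because of the critical points of $f$.
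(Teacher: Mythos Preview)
Your strategy---build a positive-dimension hyperbolic set, spread it $\eta$-densely by pull-backs, then compare dimensions with $\HD(E(\alpha)) \le \alpha^{-1}$---is sound, but you have the quantifiers in the wrong order, and this creates an artificial difficulty. The lemma allows $\alpha$ to be chosen \emph{after} the hyperbolic set. The paper does exactly this: it produces, via Lemma~\ref{katok} applied to any positive-entropy measure, a single horseshoe $\widehat{\Lambda}_0$ of some positive dimension $d>0$, and then simply sets $\alpha = 2/d$. You instead fix $\alpha > 1$ first and are forced to find hyperbolic sets of dimension exceeding a prescribed $\alpha^{-1}$, which leads you to invoke that the hyperbolic dimension of $X$ is $1$---a substantial external fact the paper never needs. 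Your fallback claim, that one can ``arrange $\HD(\Lambda_0) > \alpha^{-1}$ by iterating and refining,'' is incorrect: passing from $f^m$ to $f^{km}$ on the same invariant Cantor set does not change its Hausdorff dimension.

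The mechanism for $\eta$-density is also different. You pull back many bi-Lipschitz copies of $\Lambda_0$ into each $\eta/2$-ball and argue each retains the dimension; the paper instead adjoins to its horseshoe a \emph{single} new diffeomorphic branch whose finite orbit segment visits an $\eta$-dense set of points before landing back in $\widehat{\Lambda}_0$. The enlarged system $f^M|_{\widehat{\Lambda}}$ is then itself topologically exact, so every local piece $B(x,\eta)\cap\widehat{\Lambda}$ eventually maps onto all of $\widehat{\Lambda}$, yielding $\HD(B(x,\eta)\cap\widehat{\Lambda}) \ge \HD(\widehat{\Lambda}_0)$ directly. Your route can be made to work once $\alpha$ is chosen correctly, but two points need more than hand-waving: the union of the pull-back copies with their forward iterates is hyperbolic only because finitely many finite orbit segments are being adjoined to a hyperbolic core (this deserves a line); and the pull-backs by $f^{N_k}$ are diffeomorphic only if $I_0$ avoids $\bigcup_{j=1}^{\max_k N_k} f^j(\Crit(f))$, a finite set that depends on $\eta$, so you must verify that after this $\eta$-dependent shrinking the surviving piece of $\Lambda_0$ still has dimension above the $\eta$-independent threshold $\alpha^{-1}$.
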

\begin{proof}
Since $f$ is topologically exact, there exist an integer $n>0$ and a
closed subset $\widehat A$ of $X$ such that $f^n(\widehat A)\subset \widehat A$ and
 $f^n\colon\widehat A\to f^n(\widehat A)$ is topologically conjugate to the one-sided full shift on two symbols.
Hence, $f$ has positive topological entropy, see also~\cite[Proposition~4.70]{Rue17}.
From the variational principle, see for
example~\cite[Theorem~4.4.11]{Kel98} or~\cite[Theorem~3.4.1]{PrzUrb10}, there is a measure~$\mu$ in~$\cM(f)$ satisfying~$h(\mu) >
0$, and therefore~$\lambda(\mu) > 0$ by Ruelle's inequality.
By Lemma~\ref{katok} with $\varepsilon = \lambda(\mu)/2$, there are integers~$k \ge 2$ and~$m \ge 1$, a closed subinterval~$K$ of~$X$
and pairwise disjoint closed subintervals~$K_1$, \ldots, $K_k$ of~$K$, such that for each~$i$ in~$\{1, \ldots, k \}$ the map~$f^m \colon K_i \to K$ is a diffeomorphism and~$|Df^m| \ge \exp(\lambda(\mu)m/2)$ on~$K_i$.
It follows that the maximal invariant set~$\hLambda_0$ of~$f^m$
on~$\bigcup_{i = 1}^k K_i$ is a hyperbolic set for~$f^m$.
Since~$k \ge 2$, we have~$\HD(\hLambda_0) > 0$.

Let~$Q \ge 2 \eta^{-1}$ be an integer and put~$\xi \= \exp(\lambda(\mu) m/2)$.
Since~$f$ is topologically exact, the map~$f^m$ is also topologically exact, so there is an integer~$N \ge 1$ such that 
$f^{Nm} \left( \left( \frac{i - 1}{Q}, \frac{i}{Q} \right) \right) = X$ holds
for each $i\in\{1, \ldots, Q \}$. 
Let~$p_0$ be a point in the uncountable set~$\hLambda_0$ that is not in~$\bigcup_{j = 1}^{\infty} f^j(\Crit(f))$.
Define recursively for each~$i\in\{1, \ldots, Q \}$ a point~$p_i\in\left( \frac{i - 1}{Q}, \frac{i}{Q} \right)$, so that~$f^{Nm}(p_i) = p_{i - 1}$.
Using again that~$f^m$ is topologically exact, we can find an integer~$N' \ge 1$ and a point~$p$ in the interior of~$K$ that is not in~$\hLambda_0$, such that~$f^{N' m}(p) = p_Q$.
Defining~$\ell \= QN + N'$, we have that~$f^{\ell m}(p) = p_0$ and that the set
$$ \{ p, f^m(p), \ldots, f^{\ell m}(p) \}
\supset
\{ p_1, p_2, \ldots, p_Q \} $$
is $\eta$-dense in~$X$.
Since~$p_0$ is not in~$\bigcup_{j = 1}^{\infty} f^j(\Crit(f))$, there is~$\delta_0 > 0$ such that~$B(p_0, \delta_0)$ is disjoint from~$\bigcup_{j = 1}^{\ell m} f^j(\Crit(f))$.
It follows that the pull-back~$W_0$ of~$B(p_0, \delta_0)$ by~$f^{\ell m}$ containing~$p$ is diffeomorphic.
Reduce~$\delta_0$ if necessary so that~$W_0$ is contained in~$K$.
Let~$\ell_0 \ge 1$ be a sufficiently large integer such that $\xi^{-\ell_0} < \inf_{W_0} |Df^{\ell m}|$ and such that the pull-back of~$K$ by~$f^{\ell_0 m}$ containing~$p_0$ is contained
in~$B(p_0, \delta_0)$.
Since~$p_0$ is in~$\hLambda_0$, it follows that this last pull-back is diffeomorphic.
We conclude that, if we put~$M \= (\ell + \ell_0)m$, then the pull-back~$L_0$ of~$K$ by~$f^M$ containing~$p$ is diffeomorphic.
Moreover, from our choice of~$\ell_0$ we have
\begin{equation}
  \label{eq:3}
\inf_{L_0} |Df^M|
\ge
\xi^{\ell_0} \inf_{W_0}|Df^{\ell m}|
>
1.  
\end{equation}
Let~$\cL$ be the collection formed by~$L_0$ and by all pull-backs of~$K$ by~$f^M$ that
intersect~$\hLambda_0$.
Since ~$\inf_{K_i}|Df^m| \ge \xi$ for each~$i\in\{1, \ldots, k \}$,
$\inf_L |Df^M| \ge \xi^{\ell + \ell_0} > 1$ holds for every $L\in\cL$ different from~$L_0$.
Together with~\eqref{eq:3} this implies that the maximal invariant set~$\hLambda$ of~$f^M$ in~$\bigcup_{L \in \cL} L$ is a hyperbolic set for~$f^M$, and that~$f^M \colon \hLambda \to \hLambda$ is topologically exact.
On the other hand, the point~$p$ is by definition in~$L_0$ and~$f^M(p) = f^{\ell_0 m}(p_0)$ is in~$\hLambda_0$.
This implies $p\in\hLambda$ and therefore~$\hLambda$ is $\eta$-dense on~$X$.
So, for every~$x\in X$ the ball~$B(x, \eta)$ intersects~$\hLambda$ and, since~$f^M \colon \hLambda \to \hLambda$ is topologically exact, it follows that there is an integer~$k \ge 1$ such that~$f^{kM}(B(x, \eta) \cap \hLambda) = \hLambda$.
Using that~$f^{k M}$ is Lipschitz continuous on~$\hLambda$ and that~$\hLambda$ contains~$\hLambda_0$, we obtain
$$ \HD(B(x, \eta) \cap \hLambda)
\ge
\HD(\hLambda)
\ge
\HD(\hLambda_0). $$
In view of Lemma~\ref{HD}, this proves the lemma with~$\alpha = \frac{2}{\HD(\hLambda_0)}$ and with the hyperbolic set for~$f$ defined by~$\Lambda \= \bigcup_{i = 0}^{M - 1} f^i(\hLambda)$.
\end{proof}

\subsection{Proof of the Uniform Scale Lemma}
\label{ss:proof of USL}
Let $\varepsilon>0$ be given.
Let $n_1$ and $\eta_0 > 0$ be such that the conclusions of
Proposition~\ref{p:sub-exponential ratio growth} hold with~$\varepsilon$ replaced by~$\varepsilon/2$.
Fix $\eta \in (0, \eta_0)$, and let~$\alpha$ and~$\Lambda$ be given by Lemma~\ref{l:big hyperbolic} with~$\eta$ replaced by $\eta/6$.
Since~$\Lambda$ is a hyperbolic set for~$f$, there exist constants
$C_0>0$, $\kappa > 0$, $\lambda > 1$ such that for every~$x \in X$ and every integer $n\geq1$ such that $\dist(f^{i}(x),\Lambda)\leq 3\kappa$ for every  $i\in\{0,1,\ldots, n-1\}$, 
 $|Df^n(x)|\geq C_0 \lambda^n$ holds.
It follows that there is a constant~$C_1 > 0$ such that for every interval~$U$ intersecting~$\Lambda$ and satisfying $|U| \le 3\kappa$,
there is an integer $k\geq0$ such that
\begin{equation}
\label{choicekappa}
 k\leq C_1\log(1/|U|), \quad
3 \kappa \le |f^k(U)|\le 3 \kappa \cdot \sup_X |Df|, 
\end{equation}
and such that~$f^k$ maps~$U$ diffeomorphically onto~$f^k(U)$.
Reduce~$\kappa$ if necessary, so that~$\kappa \le \eta / (3 \sup_X|Df|)$,
and so that for every~$U$ and~$k$ as above we have in addition that
the distortion of~$f^k$ on~$U$ is bounded by~$2$.

By Lemma \ref{l:big hyperbolic},
 each ball of radius $\eta/6$ contains an $\alpha$-safe point in $\Lambda$.
From this and the compactness of $X$,
we can find a finite subset~$F$ of $\Lambda \setminus E(\alpha)$ that
is $(\eta/3)$-dense in~$X$.
Let $n_0\geq n_1$ be a sufficiently large integer so that~$F$ is
disjoint from~$E_{n_0}(\alpha)$,
\begin{equation}
\label{expansion1}
n_0^{-\alpha}\le \min \left\{ \frac{\eta}{6}, \frac{3}{2} \kappa \right\}
\text{ and }
\frac{n_0^{-\alpha}}{12\eta}\geq e^{-\frac{\varepsilon}{2} n_0}.
\end{equation}

Now, let $n\geq n_0$ be an integer, and~$W$ a subinterval
of~$X$ that satisfies $\eta\leq|f^n(W)|\leq 2\eta$.
Since the finite set $F$ is $(\eta/3)$-dense, there is a point $x \in F$ whose 
distance to the mid point of~$f^n(W)$ is at most $\eta/3$.
Since $|f^n(W)|\geq \eta$ it follows that $B(x,\eta/6)$ is contained in~$f^n(W)$.
Together with the first inequality in \eqref{expansion1} this implies
that $U = B(x,n^{-\alpha})$ is contained in~$f^n(W)$.
Since by construction~$x\notin E_{n_0}(\alpha)$, every pull-back of~$U$ by~$f^n$ is diffeomorphic.
Take one pull-back of~$U$ by~$f^n$ contained in~$W$ and denote it by~$\hJ$.

Since $x \in \Lambda$ and~$|U| = |B(x,n^{-\alpha})| \le 3 \kappa$ by the first inequality
in~\eqref{expansion1}, there is an integer $k\geq0$ such that
$$ k\leq C_1\log(1/|U|)\leq C_1\alpha\log n, \quad
3\kappa \le |f^k(U)| \le \eta$$
by \eqref{choicekappa},
and such that~$f^k$ maps~$U$ diffeomorphically onto~$f^k(U)$ with distortion
bounded by~2.
So, if we put $m=n+k$, then $n\leq m\leq n + C_1 \alpha \log n$ and $f^m$ maps~$\hJ$
diffeomorphically onto $f^m(\hJ)$.
Denote by~$J \subset W$ the pull-back by~$f^m$ of the interval with
the same center as~$f^m(\hJ)$ and whose length is equal to~$\frac{1}{3} |f^m(\hJ)|$.
By Proposition~\ref{p:sub-exponential ratio growth} with~$n = m$
and~$W = \hJ$, the distortion of~$f^m$ on~$J$ is bounded
by~$e^{\varepsilon n}$.
Note furthermore that
$$ |f^m(J)|
=
\frac{1}{3} |f^m(\hJ)|
=
\frac{1}{3} |f^k(U)|
\ge
\kappa. $$
On the other hand, by Proposition~\ref{p:sub-exponential
  ratio growth} and the fact that the distortion of~$f^k$ on~$U =
f^n(\hJ)$ is bounded by~$2$, we have
$$ \frac{n^{-\alpha}}{12\eta}
\leq
\frac{1}{6} \cdot \frac{ |U|}{|f^n(W)|}
\le
\frac{|f^n(J)|}{|f^n(W)|}
\leq
e^{\frac{\varepsilon}{2} n}\frac{|J|}{|W|}. $$
By the second inequality in \eqref{expansion1} this implies $|J|\geq
e^{-\varepsilon n}|W|$, and completes the proof of the lemma with~$C = \alpha C_1$.

\section{The large deviations upper bound}
\label{s:upper bound}
In this section we complete the proof of the large deviations upper
bound in the Main Theorem.
In Sect.\ref{covering} we construct
certain horseshoes (Proposition~\ref{horseshoe}) that are tailored to
a given basic open set of~$\cM(f)$.
The construction is based on the Uniform Scale Lemma in Sect.\ref{fundamental}.
In order to treat inflection critical points, initially we restrict ourselves to small intervals.
In Sect.\ref{intermediate} we prove two intermediate estimates.
The first is restricted to a small interval
(Proposition~\ref{upper0}), and the second is a global estimate
(Proposition~\ref{upper}) obtained by spreading out the local estimate.
In Sect.\ref{end} we complete the large deviations upper bound.

Positive constants we will be concerned with for the rest of this paper are 
$\varepsilon$, $\eta$,
$\kappa$, $\rho$, chosen in this order.
The purposes of them are as follows:

\begin{itemize}

\item $\varepsilon$ is the error tolerance in the statement of Proposition \ref{upper};

\item $\kappa$ determines the scale of intervals given by the Uniform Scale Lemma;

\item $\eta$ determines the scale of the images of pull-backs of intervals;

\item $\rho$ determines the scale of horseshoes (see Proposition \ref{horseshoe}).

\end{itemize}

\subsection{Horseshoe argument}\label{covering}
Let $f\colon X\to X$ be a topologically exact continuous map.
Let $n\geq1$ be an integer and $\eta$ in~$(0, 1/2)$.
Put~$M \= [1/\eta] + 1$ and note that~$1/M < \eta < 3/(2M)$.
 Set $x_k=k/M$ for each $k\in\{1,2,\ldots, M - 1 \}$, and let
 $\mathcal W_n(x_k,\eta)$ denote the collection of all pull-backs $W$ of $B(x_k,\eta)$ by $f^n$
 that satisfy $x_k\in f^n(W)$. 
 Note that elements of $\mathcal W_n(x_k,\eta)$ are pairwise disjoint.
We now define $$\mathcal P_n(\eta)=\bigcup_{k=1}^{M - 1}\mathcal W_n(x_k,\eta).$$
It is easy to see that $\mathcal P_n(\eta)$ has the following properties:

\begin{itemize}

\item for every $x\in X$ there exists 
 $W\in\mathcal P_{n}(\eta)$
such that $x\in W$;

\item for every $W\in\mathcal P_n(\eta)$, we have $\eta \le |f^n(W)|
  \le 2\eta$;

\item every element of $ \mathcal P_{n}(\eta)$ can intersect at most two others on the boundary and two others in the interior.
If $W_1, W_2\in\mathcal P_n(\eta)$ and $\interior(W_1)\cap \interior(W_2)\neq\emptyset$, then 
for some $k\in\{2,\ldots,M - 1\}$,
$$ \{W_1,W_2\}
\subset
\cW_n(x_{k-1},\eta) \cup \cW_n(x_k,\eta) \cup \cW_n(x_{k+1},\eta).$$
\end{itemize}

The first two items follow from $f(X)=X$.
The last one is immediate from the definitions, see FIGURE 2.

 \begin{figure}[htb]
 \begin{center}
 \includegraphics[height=4.5cm,width=5cm]{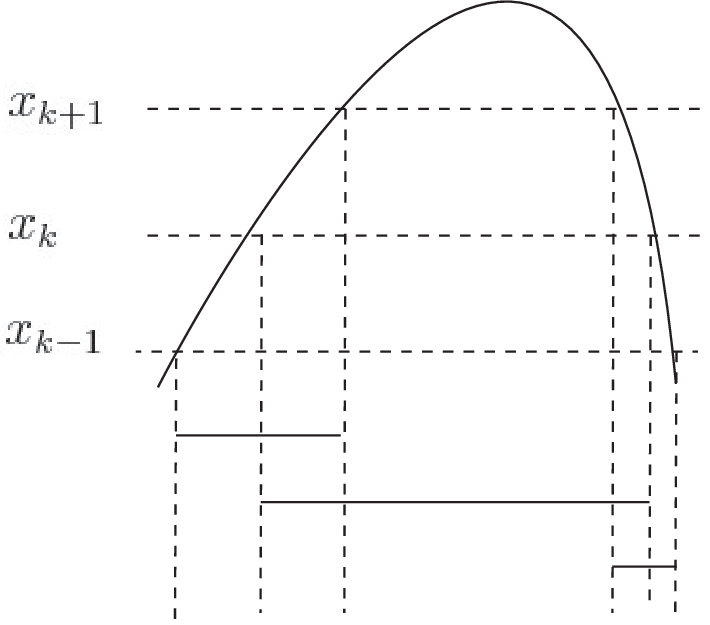}
 \caption{part of the graph of $f^n$ and the partition of $\mathcal P_n(\eta)$.
 Every element of $\mathcal P_n(\eta)$ intersects no more than two other elements in their interiors.}
 \end{center}
 \end{figure}

Fix once and for all a point 
$x_0\in\interior{X}$ such that $x_0\notin \bigcup_{n=1}^\infty f^n(\Crit(f))$. 

\begin{prop}\label{horseshoe}
Let $f\colon X\to X$ have H{\"o}lder continuous derivative and only
non-flat critical points.
Assume~$f$ is topologically exact.
Then for every $\varepsilon_0>0$ there exist $\eta>0$, $C>0$ and $\rho > 0$ such that
$B(x_0,2\rho)\cap\partial X=\emptyset$, and
 the following holds. 
Let $l\geq1$ be an integer, 
$\varphi_1,\ldots,\varphi_l\colon X\to\mathbb R$ be 
continuous functions 
and let $\alpha_1,\ldots,\alpha_l\in\mathbb R$.
For each integer~$n$ define
\begin{equation}
  \label{eq:basic open}
H_n
\=
\left\{ x \in X \colon \text{for every~$j$ in~$\{1, \ldots, l\}$ we
    have~$\frac{1}{n} S_n\varphi_j (x) \ge \alpha_j$} \right\}  
\end{equation}
and
$$ \mathcal Q_{n}
=
\left\{W \in \mathcal P_{n}(\eta) \text{ intersecting~$H_n \cap B(x_0,
  \rho)$}\right\}.$$
Then, for each sufficiently large integer $n\geq 1$ such that~$\cQ_n$ is nonempty,
  there exist an integer $q\geq n$ and
pairwise disjoint diffeomorphic pull-backs
  $L_1,\ldots,L_t$ of $B(x_0,2\rho)$ by $f^q$ contained in $B(x_0,2\rho)$ such that:

\begin{itemize}
\item[(a)] $n\leq q\leq n+C\log n$;

\item[(b)] for each~$i$ in~$\{1, \ldots, t\}$ the distortion of~$f^q$
  on~$L_i$ is bounded by~$e^{\varepsilon_0 n}$, the interval~$L_i$ is contained in some $W\in\mathcal Q_n$, and $\sum_{W\in\mathcal Q_n}|W|\leq   e^{\varepsilon_0 n}\sum_{i=1}^t|L_i|$;

\item[(c)] 
for every $x\in \bigcup_{i=1}^t L_i$ and $j\in\{1,\ldots,l\}$, we have
$\frac{1}{q}S_q\varphi_j(x) > \alpha_j - \varepsilon_0$.
\end{itemize}
\end{prop}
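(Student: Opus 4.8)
The plan is to build the desired horseshoe by composing three pull-back steps, matching the three-part description of the inducing time given in the outline. Fix $\varepsilon > 0$. First I would apply the Uniform Scale Lemma with a suitably reduced error tolerance (say $\varepsilon/4$) to obtain constants $\eta > 0$, $C_1 > 0$, $\kappa > 0$ and $n_0 \ge 1$; shrinking $\eta$ if necessary, I would also invoke Lemma~\ref{upbound} so that any pull-back $W$ of an interval of size $\le 2\eta$ has all its intermediate images of controlled (small) diameter, which is what lets us pass the Birkhoff-sum estimates from $f^n(W)$ back along the orbit with only an additive $\Lip(\varphi_j)\varepsilon$-type error. I would then fix $\rho > 0$ small enough that $B(x_0, 2\rho) \cap \partial X = \emptyset$ and that $B(x_0,2\rho)$ is disjoint from a neighborhood of $\Crit(f)$ and from finitely many forward critical images relevant to diffeomorphicity; this is possible since $x_0 \notin \bigcup_{n\ge1} f^n(\Crit(f))$.

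Next, given $l$, the $\varphi_j$, the $\alpha_j$, and a large $n$ with $\cQ_n$ nonempty, I would take any $W \in \cQ_n$. By construction $\eta \le |f^n(W)| \le 2\eta$, so the Uniform Scale Lemma produces a subinterval $J_W \subset W$ and an integer $m_W$ with $n \le m_W \le n + C_1\log n$, with $|J_W| \ge e^{-\varepsilon n/4}|W|$, with $|f^{m_W}(J_W)| \ge \kappa$, and with $f^{m_W}$ diffeomorphic on $J_W$ of distortion $\le e^{\varepsilon n/4}$. Now I would use topological exactness in the form of the constant $N(\kappa)$ from Section~\ref{ss:main}: since $|f^{m_W}(J_W)| \ge \kappa$, we have $f^{m_W + N(\kappa)}(J_W) = X \supset B(x_0, 2\rho)$, so there is a diffeomorphic sub-pull-back landing onto $B(x_0,2\rho)$ — more precisely, I would first pull back the fixed interval $B(x_0,2\rho)$ (a bounded number of steps, and diffeomorphically, using that $B(x_0,2\rho)$ avoids the critical orbit for those bounded times) inside $f^{m_W}(J_W)$, then pull that back through $f^{m_W}$ inside $J_W$. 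Set $q_W \= m_W + r_W$ where $r_W \le N(\kappa)$ is the number of bounded extra steps; then $n \le q_W \le n + C_1\log n + N(\kappa)$, and for $n$ large this is $\le n + C\log n$ with $C = C_1 + 1$, giving (a). To make the horseshoe self-referential I would additionally require at the outset that $B(x_0, 2\rho) \subset W$ is false in general but that every $W \in \cQ_n$ meets $B(x_0,\rho)$; combined with Lemma~\ref{upbound} this forces $W \subset B(x_0, 2\rho)$ for $n$ large (since $|W|$ is small), so the resulting pull-backs $L_i$ of $B(x_0,2\rho)$ lie inside $B(x_0,2\rho)$, as required.

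For (b): each $L_i$ sits inside a single $W \in \cQ_n$ by construction; the distortion of $f^{q_W}$ on $L_i$ is bounded by (distortion of $f^{m_W}$ on $J_W$) $\times$ (distortion of the bounded tail $f^{r_W}$), which is $\le e^{\varepsilon n/4} \cdot D_0$ for a fixed $D_0$, hence $\le e^{\varepsilon n}$ for $n$ large. For the measure comparison $\sum_{W \in \cQ_n}|W| \le e^{\varepsilon n}\sum_i |L_i|$: here I would discard overlaps — since elements of $\cP_n(\eta)$ meet at most four others, pass to a subfamily of $\cQ_n$ of disjoint intervals losing at most a factor $5$ — and use $|L_i| \asymp |J_W| \cdot |f^{q_W}(L_i)|/|f^{m_W}(J_W)| \gtrsim e^{-\varepsilon n/4}|W| \cdot (2\rho)/(\kappa \sup|Df|)$ via bounded distortion; the constant factors and the factor $5$ are absorbed into $e^{\varepsilon n}$ for large $n$. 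For (c): for $x \in L_i \subset W$, the orbit segment of length $q_W$ splits as the first $n$ iterates plus a tail of length $q_W - n \le C\log n + N(\kappa) = o(n)$; on the first $n$ iterates, $f^i(x)$ and $f^i(W)$ are $\kappa$-close (Lemma~\ref{upbound}) to a point realizing $\frac1n S_n\varphi_j \ge \alpha_j$ (as $W$ meets $H_n$), so $\frac1n S_n\varphi_j(x) \ge \alpha_j - \Lip(\varphi_j)\kappa$; the tail contributes at most $(q_W-n)\sup|\varphi_j|/q_W = o(1)$. Choosing $\kappa$ and $n_0$ appropriately at the start makes the total deficit $< (1 + \Lip(\varphi_j))\varepsilon$.

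The main obstacle is the bookkeeping in the bridge between the Uniform Scale Lemma's output and the return to $B(x_0,2\rho)$: one must be careful that the intermediate interval images stay small enough (via Lemma~\ref{upbound}) for the Lipschitz estimates in (c) to survive, that the bounded-time pull-back onto $B(x_0,2\rho)$ is genuinely diffeomorphic (which is why $x_0$ avoids the critical orbit and why $\rho$ is chosen small and last), and that all the accumulated multiplicative constants — distortion tails, the overlap factor, the ratio $2\rho/\kappa$ — are uniformly bounded independently of $n$, $l$ and the $\varphi_j$, so that they are swallowed by $e^{\varepsilon n}$ once $n$ is large. The inflection-critical-point issue alluded to in the outline does not yet bite here, since Proposition~\ref{horseshoe} is already localized to $B(x_0,2\rho)$ away from $\Crit(f)$; it will be handled later when the local estimate is globalized.
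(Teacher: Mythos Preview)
Your overall architecture matches the paper's proof closely: apply the Uniform Scale Lemma with tolerance $\varepsilon/4$ to obtain $\eta,C_1,\kappa,n_0$; choose $\rho$ small so that $B(x_0,2\rho)$ avoids $\bigcup_{i=1}^{N(\kappa)}f^i(\Crit(f))$ and so that (via Lemma~\ref{upbound}) all intermediate images of pull-backs of $B(x_0,2\rho)$ have length at most $\varepsilon$; observe that every $W\in\cQ_n$ is contained in $B(x_0,2\rho)$ once $n\ge N(\rho)$; inside each $J_W$ find a diffeomorphic pull-back $L_W$ of $B(x_0,2\rho)$; and finally handle overlaps with the factor~$5$ and the Birkhoff sums by splitting into the first $n$ iterates plus an $o(n)$ tail. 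All of this is correct in outline.

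There is, however, one genuine gap. You define $q_W = m_W + r_W$ and allow it to depend on $W$, but the statement requires a \emph{single} integer $q$ such that all the $L_i$ are pull-backs of $B(x_0,2\rho)$ by the same iterate $f^q$. This is not cosmetic: the horseshoe $\{L_i\}$ is fed into Lemma~\ref{horse} in the proof of Proposition~\ref{upper0}, and that lemma needs all branches to share a common return time. The paper fixes this with a pigeonhole step you omit: since $m_W\in\{n,n+1,\ldots,n+\lfloor C_1\log n\rfloor\}$ takes at most $1+C_1\log n$ values, one sets $\cQ_n(p)=\{W\in\cQ_n:m_W=p\}$, chooses $p_0$ maximizing $\sum_{W\in\cQ_n(p)}|W|$, and defines $q=p_0+N(\kappa)$. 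This costs a factor $(1+C_1\log n)^{-1}$ in the measure comparison of item~(b), which is absorbed into $e^{\varepsilon n}$ for large $n$. Without this step your construction does not produce the object the proposition asserts.

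A smaller point: in your argument for (c) you control the first $n$ iterates by ``choosing $\kappa$ appropriately,'' but $\kappa$ is handed to you by the Uniform Scale Lemma. The paper instead uses the freedom in $\rho$: by Lemma~\ref{upbound}, taking $\rho$ small forces $|f^i(W)|\le\varepsilon$ for $i\in\{0,\ldots,n-1\}$ (since $W\subset B(x_0,2\rho)$), which gives directly $\frac1n S_n\varphi_j(x)\ge \alpha_j-\Lip(\varphi_j)\varepsilon$ for $x\in W$. This is the order-of-choice issue you flag at the end, and it matters here.
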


\begin{proof}
Let $\varepsilon_0>0$. Since each $\varphi_j$ $(j=1,\ldots,l)$ is uniformly continuous,
there exists $\varepsilon\in(0,\varepsilon_0)$ such that
if $x,y\in X$ and $|x-y|\leq\varepsilon$ then $|\varphi_j(x)-\varphi_j(y)|\leq \varepsilon_0/2$.
Let $\eta_0$, $C$ and $\kappa$ be the constants for which 
the conclusion of the Uniform Scale Lemma holds with~$\varepsilon$ replaced by~$\varepsilon/4$.
Fix~$\eta \in (0, \eta_0)$ sufficiently small so that for every subinterval~$W$ of~$X$ and every integer~$m \ge 1$ such that~$|f^m(W)| \le 2\eta$, we have for each~$j \in \{0, \ldots, m - 1 \}$ the estimate~$|f^j(W)| \le \varepsilon$ (Lemma~\ref{upbound}).
 Recall that $N(\kappa)\geq1$ is the smallest integer such that 
 for every subinterval~$J$ of~$X$ with $|J|\geq\kappa$, $f^{N(\kappa)}(J)=X$, see Sect.\ref{fundamental}.
Let $\rho_0>0$ be sufficiently small
such that~$B(x_0,2\rho_0)\cap\partial X=\emptyset$
and $B(x_0,2\rho_0)$ is disjoint from~$\bigcup_{i=1}^{N(\kappa)}f^i(\Crit(f))=\emptyset$.
The last condition is indeed realized by our assumption $x_0 \notin \bigcup_{n=1}^\infty f^n(\Crit(f))$,
and it implies that each pull-back of~$B(x_0, 2\rho_0)$ by~$f^{N(\kappa)}$ is diffeomorphic.
Let~$\rho\in(0, \min\{\rho_0,\kappa\})$ be sufficiently small so that the distortion of~$f^{N(\kappa)}$ on each pull-back of~$B(x_0, 2\rho)$ by~$f^{N(\kappa)}$ 
is bounded by~$2$.

\begin{lemma}\label{sublem1}
For every integer
$n\geq N(\rho)$ and every $W\in\mathcal P_n(\eta)$
intersecting~$B(x_0, \rho)$,
we have $W\subset B(x_0,2\rho)$.
\end{lemma}
\begin{proof} From the definition of $N(\rho)$ in Sect.\ref{fundamental}, 
for every integer $n\geq N(\rho)$ and every pull-back $W\in\mathcal P_n(\eta)$, we have $|W|\leq\rho$.
So $W\cap B(x_0,\rho)\neq\emptyset$ implies~$W \subset B(x_0,2\rho)$.
\end{proof}

Let $n\geq \max\{n_0,N(\rho)\}$.
By the Uniform Scale Lemma it is possible to choose for each 
$W\in\mathcal Q_n$ a closed subinterval $J_W\subset W$ and
an integer $m_W\geq1$ such that the following holds:
$$|J_W|\geq e^{-\frac{\varepsilon}{4} n}|W|,
\ n\leq m_W\leq n+C\log n,
\ |f^{m_W}(J_W)|\geq \kappa,$$
and~$f^{m_W}$ maps $J_W$ diffeomorphically onto $f^{m_W}(J_W)$ with
distortion bounded by~$e^{\frac{\varepsilon}{4} n}$.
Set
$$ \mathcal Q_n(p)
=
\{W\in\mathcal Q_n\colon m_W=p\}. $$
Let~$p_0$ denote a value of~$p$ that maximizes
$\sum_{W\in\mathcal Q_n(p)}|W|$, so
\begin{equation}\label{horseshoe3}
\sum_{W\in\mathcal Q_n(p_0)}|W|\geq\frac{1}{1 + C\log n}\sum_{W\in\mathcal Q_n}|W|.\end{equation}

Set $q=p_0+N(\kappa)$, and note that for every sufficiently large~$n$
item~(a) holds with~$C$ replaced by~$2C$.
Since for each $W\in\mathcal Q_n(p_0)$ 
we have $|f^{p_0}(J_W)|\geq\kappa$, $J_W$ contains at least one pull-back of~$B(x_0, 2\rho)$ by~$f^q$.
Moreover, since the map $f^{p_0}\colon J_W\to
f^{p_0}(J_W)$ is diffeomorphic,
 every pull-back of $B(x_0,2\rho)$ by~$f^{q}$ that is contained in 
 $J_W$ is diffeomorphic. 
Pick one of these diffeomorphic pull-backs and denote it by
$L_W$.
Since by the Uniform Scale Lemma the distortion of~$f^{p_0}$ on~$J_W$
is bounded by~$e^{\frac{\varepsilon}{4} n}$, and since by our choice
of~$\rho$ the distortion of~$f^{N(\kappa)} = f^{q - p_0}$ on~$f^{p_0}(L_W)$ is
bounded by~$2$, it follows that the distortion of~$f^q$ on~$L_W$ is
bounded by~$e^{\varepsilon n}$, provided that~$n$ is sufficiently large.

\begin{lemma}\label{horseshoe2}
For every sufficiently large~$n$ and $W\in\mathcal Q_n(p_0)$, we have
$ |L_W|\geq e^{- \frac{3}{4} \varepsilon n}|W|. $
\end{lemma}
\begin{proof}
Since $|f^{q}(L_W)|=4\rho$ and $q-n \leq 2C \log n$, we have
$$ |f^{n}(L_W)|
\geq
|f^{q}(L_W)| \left(\sup_X |Df|\right)^{-(q-n)}
\geq
4\rho\left(\sup_X |Df|\right)^{-2C \log n}. $$
Using~$|f^n(J_W)| \le |f^n(W)| \leq 2\eta$, and that the distortion of~$f^{p_0}$
on~$J_W$ is bounded by~$e^{\frac{\varepsilon}{4} n}$, we also have
$$\frac{|L_W|}{|J_W|}
\geq
e^{- \frac{\varepsilon}{4} n} \frac{|f^{n}(L_W)|}{|f^{n}(J_W)|}
\geq
e^{- \frac{\varepsilon}{4} n} \frac{4\rho\left(\sup_X |Df|\right)^{- 2 C\log n}}{2\eta}. $$
Together with the inequality~$|J_W| \ge e^{-\frac{\varepsilon}{4} n}
|W|$, this completes the proof.
\end{proof}

Any two elements of the collection of intervals
$\{L_W\colon W\in\mathcal Q_n(p_0)\}$
are either disjoint or coincide with each other.
Moreover, each of these intervals intersects at most five elements of~$\{ L_W \colon W \in \mathcal Q_n \}$.
Let $\{L_i\}_{i=1}^t$ denote a collection of distinct elements of
$\{L_W\colon W\in\mathcal Q_n(p_0)\}$ that maximizes~$\sum_{i = 1}^t |L_i|$.
Using~\eqref{horseshoe3} and Lemma~\ref{horseshoe2}, for every large integer $n\geq1$ we have
\begin{displaymath}
\sum_{i=1}^t|L_i|
 \geq
\frac{1}{5}\sum_{W\in\mathcal Q_n(p_0)}|L_W|
\geq \frac{1}{5}e^{-\frac{3}{4} \varepsilon n}\sum_{W\in\mathcal Q_n(p_0)}|W|\geq e^{-\varepsilon n}\sum_{W\in\mathcal Q_n}|W|.
\end{displaymath}
By Lemma \ref{sublem1}, $L_i\subset B(x_0,2\rho)$. 
Since $\varepsilon\in(0,\varepsilon_0)$
this completes the proof of item (b).

It is left to prove item (c). 
Since $L_W\subset J_W\subset W$
for every $W\in\mathcal Q_n(p_0)$,
it suffices to prove the inequality for every $x\in \bigcup_{W\in\mathcal Q_n} W$.
To ease notation, write $\varphi,\alpha$ for $\varphi_j,\alpha_j$
respectively.
Let $W\in\mathcal Q_n$, choose a point $x\in W$ such that $S_n\varphi(x)\geq\alpha n$,
and let $y\in W$.
By our choice of~$\eta$ we have $|f^i(L_W)|\leq|f^i(W)|\leq\varepsilon$ for every
$i\in\{0,\ldots, n-1\}$, so
\begin{align*}
\frac{1}{n} |S_n\varphi(x)-S_n\varphi(y)|&\leq \frac{\varepsilon_0}{2}.\end{align*}
Since
$$ S_q\varphi(y)=S_n\varphi(y)+S_{q-n}\varphi(f^ny)\geq
S_n\varphi(x)-|S_n\varphi(x)-S_n\varphi(y)|-(q-n) \sup_X |\varphi| $$
and $0\leq q-n\leq 2C \log n$, for large $n$ we have
\begin{align*}\frac{1}{q}S_q\varphi(y)
&\geq 
\frac{1}{q}S_n\varphi(x)- \frac{n\varepsilon_0}{2q}-\frac{q-n}{q} \sup_X |\varphi|
\\ &\geq
\frac{n}{q}\alpha - \frac{\varepsilon_0}{2}
- \frac{2 C \log n}{n} \sup_X |\varphi|
\\ & >
\alpha - \varepsilon_0 .\end{align*}
This completes the proof of item~(c) and of the proposition.
 \end{proof}

\subsection{Intermediate estimates}\label{intermediate}
Using Proposition \ref{horseshoe} we prove two propositions.
The first one (Proposition~\ref{upper0}) is a local estimate near the point~$x_0$ chosen before
Proposition~\ref{horseshoe}.
The second proposition (Proposition~\ref{upper}) is a global estimate
that is obtained by using the topological exactness of~$f$ to spread
out the local estimate.
\begin{prop}
\label{upper0}
Let $f\colon X\to X$ have H{\"o}lder continuous derivative and only
non-flat critical points.
Assume~$f$ is topologically exact.
Then for every $\varepsilon_0>0$ there exists $\rho>0$ such that
 the following holds. 
  Let $l\geq1$ be an integer,
$\varphi_1,\ldots,\varphi_l\colon X\to\mathbb R$ be 
continuous functions and let $\alpha_1,\ldots,\alpha_l\in\mathbb R$.
Then there exists an integer $n_{0}\geq1$ such that, if
$n\geq n_{0}$ is an integer for which the set~$H_n$ defined
by~\eqref{eq:basic open} is non-empty, then there exists $\mu\in\mathcal M(f)$ such that 
$$ \int\! \varphi_jd\mu
>
\alpha_j -  \varepsilon_0 \ \text{ for every $j\in\{1,\ldots,l\}$},$$
and
$$ \frac{1}{n} \log \left| H_n \cap B(x_0,\rho) \right|
\leq
F(\mu)+ \varepsilon_0. $$
\end{prop}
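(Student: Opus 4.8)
The plan is to run the horseshoe of Proposition~\ref{horseshoe} through the thermodynamic formalism and extract a single invariant measure, exploiting that the free energy function is affine on $\cM(f)$ and that the entropy of $f^q$ is $q$ times that of $f$. First I would apply Proposition~\ref{horseshoe} with $\varepsilon$ replaced by $\varepsilon/3$, producing constants $\eta,C,\rho>0$; this $\rho$ is the one asserted here. Now fix $l$, the $\varphi_j$ and the $\alpha_j$. If no $\mu\in\cM(f)$ satisfies $\int\varphi_j\,d\mu>\alpha_j-(1+\Lip(\varphi_j))\varepsilon$ for all $j$, then---using compactness of $\cM$ and that any weak-$*$ limit of empirical measures $\delta_{x_k}^{n_k}$ with $n_k\to\infty$ is $f$-invariant---the set $H_n$ is empty for all large $n$, so the statement is vacuous; otherwise fix an admissible $\mu^{*}$. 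Take $n$ large with $H_n\neq\emptyset$. If $H_n\cap B(x_0,\rho)=\emptyset$ the left-hand side is $\log 0=-\infty$ and $\mu=\mu^{*}$ does the job; so assume $H_n\cap B(x_0,\rho)\neq\emptyset$, i.e. $\cQ_n\neq\emptyset$, and let $q$ and the diffeomorphic pull-backs $L_1,\dots,L_t$ of $B(x_0,2\rho)$ by $f^q$ be as in Proposition~\ref{horseshoe}, with properties (a)--(c).

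Next I would build the measure. Put $b_i=\inf_{L_i}\log|Df^q|$, $Z=\sum_{i=1}^{t}e^{-b_i}$ and $p_i=e^{-b_i}/Z$. Since each $f^q\colon L_i\to B(x_0,2\rho)$ is a diffeomorphism onto a set containing $\bigcup_i L_i$, the cylinder sets $C_{\omega_0\cdots\omega_{k-1}}=\{y\colon f^{qj}(y)\in L_{\omega_j},\ 0\le j\le k-1\}$ are nonempty and nested and carry a well-defined $f^q$-invariant Bernoulli measure $\nu$ with $\nu(L_i)=p_i$; set $\mu=\frac1q\sum_{k=0}^{q-1}f^k_{*}\nu\in\cM(f)$. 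Affineness of entropy and the identity $h(\mu,f)=\frac1q h(\nu,f^q)$, together with $h(\nu,f^q)\ge H(p)$ where $H(p)=-\sum_i p_i\log p_i$ is the entropy of $\nu$ relative to the partition $\{L_i\}$, give $h(\mu,f)\ge\frac1q H(p)$, while $\lambda(\mu)=\frac1q\int\log|Df^q|\,d\nu$. The distortion bound $e^{(\varepsilon/3)n}$ of $f^q$ on each $L_i$ (item~(b)) gives $\int\log|Df^q|\,d\nu\le\sum_i p_i b_i+\frac{\varepsilon}{3}n$, so, using the elementary identity $H(p)-\sum_i p_i b_i=\log Z$,
$$ qF(\mu)=h(\nu,f^q)-\int\log|Df^q|\,d\nu\ge\log Z-\tfrac{\varepsilon}{3}n. $$
For the integral constraint, item~(c) holds on all of $\bigcup_i L_i$, which contains the support of $\nu$, so $\int\varphi_j\,d\mu=\frac1q\int S_q\varphi_j\,d\nu\ge\alpha_j-(1+\Lip(\varphi_j))\frac{\varepsilon}{3}>\alpha_j-(1+\Lip(\varphi_j))\varepsilon$.

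Then I would combine the estimates. By the mean value theorem and $|B(x_0,2\rho)|=4\rho$ we get $|L_i|\le 4\rho\,e^{-b_i}$, hence $\sum_i|L_i|\le 4\rho Z$; and $H_n\cap B(x_0,\rho)$ is covered by the elements of $\cQ_n$, with $\sum_{W\in\cQ_n}|W|\le e^{(\varepsilon/3)n}\sum_i|L_i|$ by item~(b). Therefore, for $n$ large,
$$ \tfrac1n\log\bigl|H_n\cap B(x_0,\rho)\bigr|\le\tfrac{\varepsilon}{3}+\tfrac1n\log(4\rho Z)\le\tfrac{2\varepsilon}{3}+\tfrac{\log(4\rho)}{n}+\tfrac{q}{n}F(\mu)\le F(\mu)+\varepsilon, $$
where in the last step I used $q/n\ge 1$ and $F(\mu)\le 0$ (Ruelle's inequality) to replace $\frac{q}{n}F(\mu)$ by $F(\mu)$, and absorbed $\frac{\log(4\rho)}{n}\to 0$ into $\varepsilon/3$ by enlarging $n_0$. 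Taking $n_0$ to be the maximum of the threshold coming from Proposition~\ref{horseshoe} and the thresholds just used completes the proof.

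The genuine content---less an obstacle than the crux of the argument---is recognizing that the right measure to extract from the horseshoe is the Gibbs/Bernoulli measure $\nu$ with weights proportional to $e^{-b_i}$, rather than, say, the atomic measure on the periodic points contained in the $L_i$: it is exactly the entropy $H(p)$ of $\nu$ that promotes the purely combinatorial quantity $\log Z$ to the free energy $qF(\mu)$. The remaining delicate point is purely quantitative: the distortion factors $e^{(\varepsilon/3)n}$ are exponential, and this is harmless only because they are systematically divided by $n$ (equivalently $q\ge n$), and because the sign $F\le 0$ lets the spurious factor $q/n$ be discarded.
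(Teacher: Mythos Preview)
Your proof is correct and follows the same overall scheme as the paper: apply Proposition~\ref{horseshoe} (with a smaller $\varepsilon$), cover $H_n\cap B(x_0,\rho)$ by $\bigcup_{W\in\cQ_n}W$, and produce an $f$-invariant measure supported on the horseshoe whose free energy controls $\sum_i|L_i|$, then finish with $q/n\ge 1$ and $F\le 0$.

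The one substantive difference is how the measure is produced. The paper isolates this step as Lemma~\ref{horse} and proves it abstractly: it bounds the topological pressure of $-\log|Df^q|$ on the maximal invariant set from below via $(n,\varepsilon)$-separated preimage sums, invokes the variational principle, and then uses upper semi-continuity of entropy (Misiurewicz--Szlenk) to assert that the supremum is attained by some $\hmu$. You instead write down the measure explicitly as the Bernoulli measure with weights $p_i\propto e^{-b_i}$ and read off $qF(\mu)\ge\log Z-\tfrac{\varepsilon}{3}n$ from the identity $H(p)-\sum_i p_i b_i=\log Z$ together with the distortion bound; the identity $h(\mu,f)=\tfrac1q h(\nu,f^q)$ you use is justified since each factor map $f^k\colon(\nu,f^q)\to(f^k_*\nu,f^q)$ has $f^{q-k}$ as a factor map back, so all $h(f^k_*\nu,f^q)$ coincide with $h(\nu,f^q)$, and affineness plus Abramov give the claim. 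Your route is more elementary---it avoids the variational principle and the semi-continuity of entropy---while the paper's route packages the step as a reusable lemma valid for any horseshoe with bounded distortion. You are also slightly more careful than the paper in handling the trivial cases $H_n\cap B(x_0,\rho)=\emptyset$ and ``no admissible $\mu$ exists''.
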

The proof of this proposition is given after the following lemma.
The next lemma will be proved along the standard line of the ergodic theory of uniformly hyperbolic systems.

\begin{lemma}\label{horse}
Let~$f \colon X \to X$ have continuous derivative and at most a finite
number of critical points.
Moreover, let~$B$ be a subinterval of~$X$, $t, q \ge 1$ integers, and let $L_1,\ldots,L_t$ be pairwise disjoint diffeomorphic
pull-backs of~$B$ by~$f^q$ contained in~$B$.
Finally, let~$\Delta > 1$ be a constant such that for each~$i$ in~$\{1, \ldots, t \}$ the distortion of~$f^q$
on~$L_i$ is bounded by~$\Delta$.
Then there exists $\hmu\in\mathcal M(f^q)$ supported on~$L_1 \cup
\cdots \cup L_t$, such that the measure~$\mu \= \frac{1}{q} (\hmu +
\cdots + f_*^{q - 1} \hmu)$ in~$\cM(f)$ satisfies
$$ qF(\mu)
\geq
\log \left( \frac{|L_1| + \cdots + |L_t|}{\Delta |B|} \right).$$
\end{lemma}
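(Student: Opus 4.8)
The plan is to exhibit $B' := L_1 \cup \cdots \cup L_t$ as a topological ``full shift on $t$ symbols'' for $f^q$, and to place on it the Bernoulli measure whose symbol weights are proportional to the lengths $|L_i|$. For a word $w = (w_0, \dots, w_{m-1}) \in \{1, \dots, t\}^m$ I set
\[
I_w := \left\{ x \in L_{w_0} \colon f^{qk}(x) \in L_{w_k} \text{ for } 1 \le k \le m-1 \right\},
\]
a nonempty closed subinterval of $L_{w_0}$; for distinct words of the same length the corresponding intervals are disjoint, since distinct $L_i$'s are. Write $S := |L_1| + \cdots + |L_t|$, $p_i := |L_i|/S$ and $P(w) := p_{w_0}\cdots p_{w_{m-1}}$.

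I would construct $\hmu$ as a weak limit of empirical-type measures. For each $m \ge 1$ let $\lambda_m := \sum_{|w|=m} P(w)\,|I_w|^{-1}\,(\mathrm{Leb}|_{I_w})$, a probability measure supported on $B'$, and put $\hmu_m := \tfrac1m\sum_{k=0}^{m-1}(f^q)^k_*\lambda_m$. Since $f^{qk}(I_w)\subset L_{w_k}\subset B'$ for $0\le k\le m-1$, each $\hmu_m$ is supported on $B'$, and $\|(f^q)_*\hmu_m-\hmu_m\|\le 2/m$; hence any weak limit $\hmu$ along a subsequence is an $f^q$-invariant probability measure supported on the closed set $B'$. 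A direct count gives $\hmu_m(I_v)\to P(v)$ as $m\to\infty$ for every fixed word $v$ (the discrepancy comes only from the last $|v|-1$ values of $k$), so by the portmanteau theorem $\hmu(I_v)\ge P(v)$; summing over all words $v$ of a fixed length $m'$ and using disjointness together with $\sum_{|v|=m'}P(v)=1$ forces $\hmu(I_v)=P(v)$ for every $v$. In particular, with $\mathcal A := \{L_1,\dots,L_t,\,X\setminus B'\}$ the refinement $\bigvee_{k=0}^{m'-1}(f^q)^{-k}\mathcal A$ has, modulo $\hmu$-null sets, atoms $\{I_v\colon |v|=m'\}$, whence $H_{\hmu}\big(\bigvee_{k=0}^{m'-1}(f^q)^{-k}\mathcal A\big) = -\sum_{|v|=m'}P(v)\log P(v) = -m'\sum_i p_i\log p_i$, and therefore $h_{f^q}(\hmu)\ge h_{f^q}(\hmu,\mathcal A) = -\sum_i p_i\log p_i$.

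Finally I would set $\mu := \tfrac1q\sum_{j=0}^{q-1}f^j_*\hmu$, which is $f$-invariant since $f^q_*\hmu=\hmu$. As $\hmu$ lives on $B'$, where $Df^q\ne0$, the measure $\mu$ charges no critical point and $\log|Df|$ is $\mu$-bounded, so $F(\mu)=h(\mu)-\lambda(\mu)$. By the entropy-of-powers identity together with $h_{f^q}(f^j_*\hmu)=h_{f^q}(\hmu)$ — which holds since $f^j$ and $f^{q-j}$ display $(X,f^q,f^j_*\hmu)$ and $(X,f^q,\hmu)$ as mutual measure-preserving factors — one gets $h(\mu)=\tfrac1q h_{f^q}(\hmu)$, and by the chain rule $\lambda(\mu)=\tfrac1q\int\log|Df^q|\,d\hmu$, so $qF(\mu)=h_{f^q}(\hmu)-\int\log|Df^q|\,d\hmu$. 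For this last integral the mean value theorem gives $\int_{L_i}|Df^q|=|f^q(L_i)|=|B|$, so the distortion hypothesis yields $|Df^q(x)|\le\Delta|B|/|L_i|$ for all $x\in L_i$; since $\hmu(L_i)=p_i$ this gives $\int\log|Df^q|\,d\hmu\le\sum_i p_i\log(\Delta|B|/|L_i|)$. Combining the two estimates, $qF(\mu)\ge-\sum_i p_i\log p_i-\sum_i p_i\log(\Delta|B|/|L_i|)=\sum_i p_i\log(|L_i|/p_i)-\log(\Delta|B|)=\log S-\log(\Delta|B|)$, which is exactly the claimed bound. The only genuinely delicate step is the bookkeeping that forces $\hmu$ to assign the Bernoulli masses $P(v)$ to the cylinders $I_v$; the remaining distortion and entropy estimates are routine, as the statement of the lemma anticipates.
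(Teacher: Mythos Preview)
Your proof is correct and takes a genuinely different route from the paper's. The paper does not build a measure explicitly; instead it works on the maximal invariant set~$K$ of~$f^q$ in~$L_1\cup\cdots\cup L_t$, observes that the preimages~$(f^q|_K)^{-n}(y_0)$ are $(n,\varepsilon)$\nobreakdash-separated, and uses the variational principle for the pressure of~$-\log|Df^q|$ together with the distortion bound to obtain
\[
\sup_{\hnu\in\cM(f^q|_K)}\left(h_{f^q|_K}(\hnu)-\int\log|Df^q|\,d\hnu\right)\ \ge\ \log\frac{|L_1|+\cdots+|L_t|}{\Delta|B|}.
\]
It then invokes the upper semi-continuity of the entropy map for piecewise monotone maps (Misiurewicz--Szlenk) to conclude that the supremum is attained. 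Your argument instead constructs the Bernoulli measure with weights $p_i=|L_i|/S$ directly, computes its entropy and Lyapunov integral by hand, and checks that this particular measure already meets the bound. The paper's approach is shorter and calls on standard thermodynamic-formalism machinery; yours is more elementary and self-contained, avoiding both the variational principle and the Misiurewicz--Szlenk theorem, at the cost of the bookkeeping needed to pin down $\hmu(I_v)=P(v)$ and to justify $h_f(\mu)=\tfrac1q h_{f^q}(\hmu)$ via affinity of entropy and the mutual-factor argument.
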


Recall that for a continuous map~$f \colon X \to X$, an integer~$n \ge 1$ and~$\varepsilon > 0$, a subset~$Y$ of~$X$ is \emph{$(n,\varepsilon)$-separated} if for each distinct~$y$ and~$y'$ in~$Y$ there is~$j$ in~$\{0, \ldots, n - 1 \}$ such that~$|f^j(y) - f^j(y')| \ge \varepsilon$. 
\begin{proof}
Let~$K$ be the maximal invariant set of~$f^q$ on~$L_1 \cup \cdots
\cup L_t$, and fix a point~$y_0$ in this set.
Moreover, put
$$ \varepsilon
\= 
\min \{ \dist(L_i, L_j) \colon i, j \in \{ 1, \ldots, t
\} \text{ distinct} \}, $$
and note that for every integer~$n \ge 1$ the set~$(f^q|_K)^{-n}(y_0)$ is
  $(n, \varepsilon)$-separated for~$f^q|_K$.
From the definition of topological pressure in terms of~$(n, \varepsilon)$-separated sets and the variational
principle, this implies
\begin{displaymath}
  \sup_{\hnu \in \cM(f^q|_K)} \left( h_{f^q|_K}(\hnu)
- \int \log |Df^q| d \hnu \right)
\ge
\limsup_{n \to \infty} \frac{1}{n} \log \left( \sum_{x \in \left(
      f^q|_K \right)^{-n}(y_0)} |Df^{qn}(x)|^{-1} \right),
\end{displaymath}
where $\cM(f^q|_K)$ denotes the set of $f^q|_K$-invariant Borel probability measures
and  $h_{f^q|_K}(\hnu)$ denotes the entropy of $\hnu\in\cM(f^q|_K)$.
See for example~\cite[Theorem~4.4.11]{Kel98} or~\cite[Theorems~3.3.2 and~3.4.1]{PrzUrb10}.
Using that for each~$i$ in~$\{1, \ldots, t\}$ the distortion of~$f^q$
on~$L_i$ is bounded by~$\Delta$, we have for every~$n \ge 1$
\begin{equation*}
    \sum_{x \in \left( f^q|_K \right)^{-n}(y_0)} |Df^{qn}(x)|^{-1}
 \ge
\left( \inf_{y' \in K} \sum_{x' \in \left( f^q|_K \right)^{-1}(y')} |Df^q(x')|^{-1}
\right)^n
 \ge
\left( \frac{|L_1| + \cdots + |L_t|}{\Delta |B|} \right)^n.
\end{equation*}
We thus obtain
$$   \sup_{\hnu \in \cM(f^q|_K)} \left( h_{f^q|_K}(\hnu)
- \int \log |Df^q| d \hnu \right)
\ge
\log \left( \frac{|L_1| + \cdots + |L_t|}{\Delta |B|} \right). $$
Since the measure-theoretic entropy of~$f^q$ is
upper semi-continuous~\cite[Corollary~2]{MisSzl80}, the supremum above is
attained.
Then the lemma follows from the fact that for
each~$\hnu$ in~$\cM(f^q|_K)$, the measure~$\nu \= \frac{1}{q} (\hnu +
f_* \hnu + \cdots + f_*^{q - 1} \hnu)$ is in~$\cM(f)$ and satisfies
\begin{equation*} h_{f^q|_K}(\hnu) - \int \log |Df^q| d \hnu
= q F(\nu).\qedhere\end{equation*}
\end{proof}

\begin{proof}[Proof of Proposition~\ref{upper0}]
Let $\varepsilon_0>0$.
Take constants $\eta$, $C$, $\rho$, 
a positive integer $q$, and a collection of pairwise disjoint closed intervals $L_1,\ldots,L_t$  for which 
the conclusion of Proposition~\ref{horseshoe} holds with~$\varepsilon_0$
replaced by~$\varepsilon_0 /2$.
Since $H_n \cap B(x_0,\rho) \subset \bigcup_{W\in\mathcal Q_n} W$,
$$\log\left|H_n \cap B(x_0,\rho) \right|
\leq
\log \left( \sum_{W\in\mathcal Q_n}|W| \right).$$
Let $\mu\in\mathcal M(f^q)$ be as in Lemma~\ref{horse} applied to~$B =
B(x_0, 2\rho)$, $\Delta = e^{\frac{\varepsilon_0}{2} n}$, and the pull-backs~$L_1,\ldots,L_t$
of~$B(x_0, 2\rho)$ by~$f^q$.
Proposition~\ref{horseshoe}(c) yields $\int \varphi_j d \mu > \alpha_j
- \varepsilon_0$ for every $j\in\{1,2,\ldots,l\}$.
On the other hand, using $|B(x_0, \rho)| \le 1$ and Proposition~\ref{horseshoe}(b), for every large $n$ we have  
$$ \log \left(\sum_{W\in\mathcal Q_n}|W|\right)
\leq
\log \left(\sum_{i=1}^t|L_i| \right) + \frac{\varepsilon_0}{2} n
\leq qF(\mu) + \varepsilon_0 n. $$
Since $q\geq n$ and $F(\mu)\leq0$  from Proposition \ref{lower-prop}, 
we have
\begin{align*}
\frac{1}{n}\log \left(\sum_{W\in\mathcal Q_n}|W|\right)
\leq
\frac{q}{n} F(\mu) + \varepsilon_0
\leq
F(\mu) + \varepsilon_0.
\end{align*}
This yields the desired inequality.
\end{proof}

\begin{prop}\label{upper}
Let $f\colon X\to X$ have H{\"o}lder continuous derivative and only
non-flat critical points.
Assume~$f$ is topologically exact.
Let $\varepsilon_0 >0$, let $l\geq1$ be an integer,
let $\varphi_1,\ldots,\varphi_l\colon X\to\mathbb R$ be
continuous functions, and let $\alpha_1,\ldots,\alpha_l\in\mathbb R$.
Then
\begin{multline*}
\limsup_{n\to\infty}\frac{1}{n} \log \left|\left\{x\in X\colon \frac{1}{n}S_n\varphi_j(x)\geq\alpha_j\ \text{ for every $j\in\{1,\ldots,l\}$}\right\}\right|\\
\\ \leq
\sup\left\{F(\mu)\colon\text{$\mu\in\mathcal M(f)$ and $\int\!\varphi_jd\mu>\alpha_j- \varepsilon_0 $ for every $j\in\{1,\ldots,l\}$}\right\}+ \varepsilon_0 .
\end{multline*}
\end{prop}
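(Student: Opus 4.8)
The plan is to deduce Proposition~\ref{upper} from the local estimate Proposition~\ref{upper0} by using topological exactness to transport the estimate near~$x_0$ to all of~$X$, at the cost of a slight relaxation of the thresholds~$\alpha_j$.

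Fix~$\varepsilon > 0$ as in the statement, and choose~$\varepsilon' \in (0, \varepsilon]$ so small that $(2 + \Lip(\varphi_j)) \varepsilon' < \varepsilon$ for every~$j \in \{1, \ldots, l\}$. Let~$\rho > 0$ be the constant given by Proposition~\ref{upper0} with~$\varepsilon$ replaced by~$\varepsilon'$. Since~$x_0 \in \interior X$, the interior of~$B(x_0, \rho)$ is a nonempty open interval, so by topological exactness there is an integer~$N \ge 1$ with~$f^N(B(x_0, \rho)) = X$. Write $H_n \= \{x \in X \colon \tfrac1n S_n \varphi_j(x) \ge \alpha_j \text{ for every } j\}$ for the set in the statement, and for each integer~$m \ge 1$ set
$$ \widetilde H_m
\=
\left\{ x \in X \colon \tfrac1m S_m \varphi_j(x) \ge \alpha_j - \varepsilon' \text{ for every } j \in \{1, \ldots, l\} \right\}. $$
The first step is the inclusion $H_n \subset f^N\!\left( \widetilde H_{n + N} \cap B(x_0, \rho) \right)$, valid for all sufficiently large~$n$: given~$x \in H_n$, pick~$y \in B(x_0, \rho)$ with~$f^N(y) = x$; then for every~$j$,
$$ S_{n + N} \varphi_j(y)
=
S_N \varphi_j(y) + S_n \varphi_j(x)
\ge
n \alpha_j - N \sup_X |\varphi_j|, $$
so $\tfrac{1}{n + N} S_{n + N}\varphi_j(y) \ge \alpha_j - \varepsilon'$ once~$n$ is large (the left side tends to~$\alpha_j$), i.e.\ $y \in \widetilde H_{n + N} \cap B(x_0, \rho)$. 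Since~$f^N$ is Lipschitz with a constant at most~$(\sup_X |Df|)^N < \infty$ that is independent of~$n$, this inclusion yields $|H_n| \le (\sup_X |Df|)^N \cdot |\widetilde H_{n + N} \cap B(x_0, \rho)|$; when~$\widetilde H_{n + N} = \emptyset$ the inclusion forces~$H_n = \emptyset$ and the bound is trivial.

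The second step is to feed~$\widetilde H_{n+N}$ back into Proposition~\ref{upper0}, now applied to the functions~$\varphi_j$, the thresholds~$\alpha_j - \varepsilon'$, and the parameter~$\varepsilon'$: for all large~$n$ there is~$\mu_n \in \cM(f)$ with
$$ \int \varphi_j \, d\mu_n
>
(\alpha_j - \varepsilon') - (1 + \Lip(\varphi_j))\varepsilon'
=
\alpha_j - (2 + \Lip(\varphi_j))\varepsilon'
>
\alpha_j - \varepsilon $$
for every~$j$, and $\tfrac{1}{n + N} \log |\widetilde H_{n + N} \cap B(x_0, \rho)| \le F(\mu_n) + \varepsilon'$. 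Setting $s \= \sup\{ F(\mu) \colon \mu \in \cM(f),\ \int \varphi_j \, d\mu > \alpha_j - \varepsilon \text{ for every } j \}$, the first display shows~$F(\mu_n) \le s$, hence $\tfrac{1}{n + N} \log |\widetilde H_{n + N} \cap B(x_0, \rho)| \le s + \varepsilon'$. Combining this with the Lipschitz bound of the first step, dividing by~$n$, and using that~$\tfrac{n + N}{n} \to 1$ and that~$\tfrac{N}{n}\log(\sup_X|Df|) \to 0$, I obtain
$$ \limsup_{n \to \infty} \frac1n \log |H_n|
\le
s + \varepsilon'
\le
s + \varepsilon, $$
which is precisely the asserted inequality.

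I do not expect a genuine obstacle here: the analytic substance is entirely in Proposition~\ref{upper0} and, behind it, in the Uniform Scale Lemma. The only delicate point is bookkeeping. Prepending~$N$ iterates perturbs the Birkhoff averages by~$O(1/n)$, which forces the thresholds to be relaxed by~$\varepsilon'$; this relaxation then composes with the $(1 + \Lip(\varphi_j))\varepsilon'$ loss coming from Proposition~\ref{upper0}, so~$\varepsilon'$ must be chosen small in terms of~$\varepsilon$ and~$\max_j \Lip(\varphi_j)$ in order to keep the output measures~$\mu_n$ inside the constraint~$\int \varphi_j \, d\mu > \alpha_j - \varepsilon$ defining the supremum on the right-hand side.
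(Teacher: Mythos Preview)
Your proof is correct and follows essentially the same approach as the paper: use topological exactness to pull the global set~$H_n$ back into~$B(x_0,\rho)$ via a fixed iterate, absorb the resulting~$O(1/n)$ perturbation of the Birkhoff averages into a relaxed threshold, and then invoke Proposition~\ref{upper0}. The only cosmetic difference is that the paper keeps the same time~$n$ on both sides (comparing~$S_n\varphi_j(y)$ with~$S_n\varphi_j(f^M y)$) whereas you prepend~$N$ iterates and work with~$S_{n+N}$; both bookkeeping choices lead to the same conclusion.
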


\begin{remark}
Since the Lyapunov exponent is not lower semi-continuous in general, it is not possible 
to let $\varepsilon_0 =0$ in the inequality in Proposition \ref{upper}.
\end{remark}

\begin{proof}[Proof of Proposition \ref{upper}]
Let $\varepsilon_0>0$, $l \ge 1$, $\varphi_1,\ldots,\varphi_l$,
and~$\alpha_1,\ldots,\alpha_l$ be as in the statement of the proposition.
Let~$\rho>0$ denote the constant for which the conclusion of
Proposition~\ref{upper0} holds with~$\varepsilon_0$ replaced by $\varepsilon_0 /2$.
Fix a large integer $M\geq1$ with
$f^M(B(x_0,\rho))=X$. 
Since each $\varphi_j$ is bounded, for sufficiently large $n$ we have
\begin{multline*}
\left\{x\in f^M(B(x_0,\rho))\colon
  \frac{1}{n}S_n\varphi_j(x)\geq\alpha_j\ \text{ for every
    $j\in\{1,\ldots,l\}$}\right\}
\\ \subset
f^M\left\{x\in B(x_0,\rho)\colon
  \frac{1}{n}S_n\varphi_j(x)\geq\alpha_j - \frac{\varepsilon_0}{2} \ \text{ for every
    $j\in\{1,\ldots,l\}$}\right\},
\end{multline*}
and therefore
\begin{align*}
& \frac{1}{n}\log\left|\left\{x\in X\colon
    \frac{1}{n}S_n\varphi_j(x)\geq\alpha_j\ \text{ for every
      $j\in\{1,\ldots,l\}$}\right\}\right|
\\ & \leq
\frac{1}{n} \log \left[ \left( \sup_X |Df| \right)^M \cdot \left|\left\{x\in B(x_0,\rho)\colon
    \frac{1}{n}S_n\varphi_j(x)\geq\alpha_j - \frac{\varepsilon_0}{2} \ \text{ for every
      $j\in\{1,\ldots,l\}$}\right\}\right| \right]
 \\ &\leq
\frac{1}{n} \log\left|\left\{x\in B(x_0,\rho)\colon
    \frac{1}{n}S_n\varphi_j(x)\geq\alpha_j - \frac{\varepsilon_0}{2} \ \text{ for every
      $j\in\{1,\ldots,l\}$}\right\}\right|+\frac{\varepsilon_0}{2}.  
\end{align*}
We use Proposition~\ref{upper0} with~$\alpha_j$ replaced by~$\alpha_j -  \varepsilon_0 /2 $ for
every~$j\in\{1, \ldots, l\}$.
For each sufficiently large~$n$ there exists $\mu\in\mathcal M(f)$ such that
$\int \varphi_jd\mu>\alpha_j -  \varepsilon_0$ for every $j\in\{1,\ldots,l\}$, and 
$$ \frac{1}{n} \log\left|\left\{x\in B(x_0,\rho)\colon
    \frac{1}{n}S_n\varphi_j(x)\geq\alpha_j- \frac{\varepsilon_0}{2}\
    \text{ for every $j\in\{1,\ldots,l\}$}\right\}\right|
\leq
F(\mu) + \frac{\varepsilon_0}{2}.$$
Letting $n\to\infty$ we obtain the proposition.
\end{proof}

\subsection{End of the large deviations upper bound}\label{end}
Let $f\colon X\to X$ have H{\"o}lder continuous derivative and only
non-flat critical points, and assume it is topologically exact.
Let~$\mathcal K$ be a closed subset of $\mathcal M$, and let~$\mathcal G$ be an arbitrary open set containing~$\mathcal K$.
Since~$\mathcal K$ is compact, 
one can choose a finite collection $\mathcal C_1, \ldots, 
\mathcal C_r$ of closed sets
such that $\mathcal K \subset \bigcup_{k=1}^r \mathcal C_k \subset \mathcal G $
and such that each of them has the form
$$\mathcal C_k=\left\{\mu\in\mathcal M\colon \int\!\varphi_j d\mu\geq\alpha_j\text{ for every $j\in\{1,\ldots,p\}$}\right\},$$ where $p\geq1$ is an integer, each $\varphi_j\colon X\to\mathbb R$ is a continuous function and 
$\alpha_j\in\mathbb R$.
For each $k\in\{1,2,\ldots,r\}$ and $\varepsilon_0>0$ define an open neighborhood
$\mathcal C_k(\varepsilon_0 )$ of $\mathcal C_k $
by replacing $\int\varphi_j d\nu\geq \alpha_j$ in the definition of $\mathcal C_k$ by $\int\varphi_j d\nu> \alpha_j-
\varepsilon_0$.
From Proposition \ref{upper}, for every $\varepsilon_0 >0$ and every $k\in\{1,2,\ldots,r\}$,
$$\limsup_{n\to\infty}\frac{1}{n}\log\left|\left\{x\in X\colon
\delta_{x}^n\in\mathcal C_k\right\}\right|\leq \sup_{ \mathcal C_k(\varepsilon)}F+ \varepsilon_0.$$
Since
$\bigcup_{k=1}^r \mathcal C_k (\varepsilon_0)\subset \mathcal G$
for $\varepsilon_0 >0$ small enough,
using the previous inequality for each $k\in\{1,2,\ldots,r\}$ gives
\begin{align*}
\limsup_{n\to\infty}\frac{1}{n}\log\left|\left\{x\in X\colon \delta_x^n\in\mathcal K\right\}\right|
&\le \limsup_{n\to\infty}\frac{1}{n}
\log\left|\left\{x\in X\colon\delta_x^n \in\bigcup_{k=1}^r {\mathcal C}_k \right\}\right|\\
&\leq
\max_{k\in\{1,2,\ldots,r\}} \limsup_{n\to\infty}\frac{1}{n}\log
\left|\left\{x\in X\colon\delta_x^n \in {\mathcal C}_k \right\}\right|\\
& \le \max_{k\in\{1,2,\ldots,r\}}  
\sup_{\mathcal C_k (\varepsilon_0)} F  +{\varepsilon_0}
\\ & \le
\sup_{\mathcal G}F+{\varepsilon_0}.
\end{align*}
Letting~${\varepsilon_0} \to 0$ we obtain
$$\limsup_{n\to\infty}\frac{1}{n}\log\left|\left\{x\in X\colon \delta_x^n\in\mathcal K\right\}\right|
\le \sup_{\mathcal G}F.$$
Since $\mathcal G$ is an arbitrary open set containing 
 $\mathcal K$, it follows that
\begin{align*}
\limsup_{n\to\infty}\frac{1}{n}\log\left|\left\{x\in X\colon  \delta_{x}^n\in\mathcal K\right\}\right|
\leq
\inf_{\mathcal G \supset \mathcal{K}} \sup_{ \mathcal G } F
=
\inf_{\mathcal G \supset \mathcal{K}} \sup_{  \mathcal G } (-I)
=
-\inf_{\mathcal K}I.\end{align*}
The last equality is due to the upper semi-continuity of $-I$. 
\qed

\appendix
\section{Rate functions for Hofbauer-Keller maps}
Let $f_a\colon X\to X \enspace (0< a \leq 4)$ be the quadratic map $f_a(x)=ax(1-x)$.
Let $c=1/2$ and put $X_a=[ f_a^2 (c), f_a(c)]$. Notice that $f_a(X_a)=X_a$.
Denote by $\mathcal M_a$ the space of Borel probability measures on $X_a$ endowed with the
weak* topology, and by $\mathcal M_a(f_a)$ the set of elements of $\mathcal M_a$ which are $f_a|_{X_a}$-invariant.

By \cite{GraSwi97,Lyu02}, for Lebesgue almost every $a\in (0,4]$ there exists a unique
physical measure of $f_a$. 
Based on the kneading theory, Hofbauer \& Keller \cite{HofKel90, HofKel95} constructed various examples of quadratic maps with unexpected properties.
One of them is the following.

\begin{theorem}[\cite{HofKel95}, Propositions~1 and 2]
  \label{HK}
There is a uncountable set $A\subset (0, 4)$ such that if $a\in A$ then
$f_a$ is non-renormalzable and
there are sequences $\{n_i\}_i$, $\{m_i\}_i$ of positive integers
 with $n_i <m_i < n_{i+1}$ for each $i$
such that the following holds:
\begin{enumerate}
 \item[(a)] $\displaystyle \left| \int\! \varphi \, d\delta_{x}^{n_i}  -  \int\! \varphi \, d\delta_{  {c} }^{m_i}  \right| \to 0
 \enspace (i\to\infty)$ for Lebesgue almost every $x\in X_a$ and each continuous
 $\varphi \colon X_a\to\mathbb R$;
 \item[(b)] if $z\in X_a$ and $p\geq1$ are such that $f^p(z)=z$,
 then $\delta_{z}^{p}$ is an weak*-accumulation point 
 of the sequence $\{  \delta_{c}^{m_i}\}_{i\geq1}$.
  \end{enumerate}
\end{theorem}

  In particular, if $a\in A$ then there is no physical measure of $f_a$. Hence,
   the law of large numbers does not hold for the Birkhoff sum $\varphi+\varphi\circ f_a+\cdots+\varphi\circ f_a^{n-1}$
  of a continuous function $\varphi\colon X_a\to\mathbb R$.
Nevertheless, $f_a|_{X_a}$ satisfies the hypotheses of the Main Theorem
and hence the LDP holds. The rate function is identically zero on its effective domain.

\begin{theorem}\label{HKrate}
Let $A$ be the set as in Theorem~\ref{HK}.
If $a\in A$ then the large deviations rate function of $f_a|_{X_a}$ is identically zero  
on $\mathcal M_{a} (f_a)$.
\end{theorem}

\begin{proof}
Let $a\in A$ and $\mu\in \mathcal M_a (f_a)$.
Let $\mathcal U$ be an arbitrary open set containing $\mu$.
Take $l\geq1$, continuous functions $\varphi_1,\ldots,\varphi_l\colon X_a\to\mathbb R$, $\varepsilon >0$ such that
$\mu \in \mathcal C \subset \mathcal U$, where 
$$\displaystyle \mathcal C= 
\left\{ \nu\in\mathcal M_a\colon  \left|   \int\! \varphi_j  d\nu  -  \int\! \varphi_j  d\mu  \right| \leq \varepsilon
\ \text{ for every $j\in\{1,\ldots,l\}$}  \right\} .$$
Since $f_a$ is non-renormalizable, its restriction to $X_a$ is topologically exact and has the specification property.
Hence $\mu$ is weak*-approximated by another supported on a periodic orbit
 \cite[Theorem~1]{Sig74} and
 there exist $z\in X_a$ and $p\geq1$ such that
$f_a^p(z)=z$ and
\begin{equation*}
 \left|   \int\! \varphi_j  d\delta_z^p  -  \int\! \varphi_j  d\mu  \right| \leq\frac{\varepsilon}{3}
\ \text{ for every $j\in\{1,\ldots,l\}$}.
\end{equation*}
From Theorem~\ref{HK} there are increasing sequences 
$\{n_i\}_i$, $\{m_i\}_i$ 
of positive integers for which the following holds:
\begin{equation*}
 \left| \left\{ x\in X_a\colon  \left|  \int\! \varphi_j  d\delta_{x}^{n_{i}}  -  \int\! \varphi_j  d\delta_{c}^{m_{i}}  \right| \leq 
 \frac{\varepsilon}{3}  \ \text{ for every $j\in\{1,\ldots,l\}$} \right\} \right| \geq \frac{1}{2};
\end{equation*}
\begin{equation*}   
 \left|   \int\! \varphi_j  d\delta_{c}^{m_{i}}  -  \int\! \varphi_j  d\delta_z^p  \right| \leq \frac{\varepsilon}{3}
\ \text{ for every $j\in\{1,\ldots,l\}$}.
\end{equation*}
Combining these three inequalities yields
\begin{equation}\label{hkeq4}
\frac{1}{2} \leq 
\left|\left\{ x\in X_a\colon \left|  \int\! \varphi_j  d\delta_{x}^{n_{i}}  -  \int\! \varphi_j  d\mu  \right| \leq \varepsilon 
\ \text{ for every $j\in\{1,\ldots,l\}$} \right\} \right| \leq |X_a|\le 1.
\end{equation}
Denote by $I_a$ the large deviations rate function of $f_a|_{X_a}$.
Then 
\begin{align*}
0 &\leq \limsup_{n\to \infty} \frac{1}{n}\log 
\left| \left\{ x\in X_a:  \delta_{x}^{n} \in \mathcal C   \right\} \right| \leq - \inf_{\mathcal C} I_a 
\leq  - \inf_{ \mathcal U} I_a
\leq 0.
\end{align*}
The first inequality is from \eqref{hkeq4} and the second from the Main Theorem.
Hence $\displaystyle  \inf_{ \mathcal U} I_a = 0$.
Since $\mathcal U$ is an arbitrary open set containing $\mu$ and $I_a$ is lower semi-continuous,
$I_a(\mu)=0$.
\end{proof}

\subsection*{Acknowledgments}
We would like to thank Micha{\l} Misiurewicz for his help with references,
Bing Gao, Gerhard Keller and Masato Tsujii for fruitful discussions, and the anonymous referees for their healthy criticism that helped us improve the exposition in the introduction.
The first-named author is partially supported by the Grant-in-Aid for Scientific Research (C) of the JSPS 16K05179.
The second-named author is partially supported by FONDECYT grant 1141091 and NSF Grant DMS-1700291.
The last-named author is partially supported by the Grant-in-Aid for Young Scientists (A) of the JSPS 15H05435
and the Grant-in-Aid for Scientific Research (B) of the JSPS 16KT0021.

\bibliographystyle{abbrv}

\end{document}